\numberwithin{equation}{section}
\theoremstyle{definition}
\newtheorem*{Def}{Definition}
\newtheorem{Thm}{Theorem}[section]
\newtheorem{Prop}[Thm]{Proposition}
\newtheorem{Lem}[Thm]{Lemma}
\newtheorem{Rmk}[Thm]{Remark}
\newtheorem{Cor}[Thm]{Corollary}
\newcommand{\PP}{\mathbb{P}}
\newcommand{\CC}{\mathbb{C}}
\newcommand{\QQ}{\mathbb{Q}}
\newcommand{\ZZ}{\mathbb{Z}}
\newcommand{\Bl}{\mathrm{Bl}}
\newcommand{\Aa}{\mathbb{A}}
\newcommand{\cX}{\mathcal{X}}
\newcommand{\cO}{\mathcal{O}}
\newcommand{\cT}{\mathcal{T}}
\newcommand{\De}{{\Delta}}
\DeclareFontFamily{U}{mathc}{}
\DeclareFontShape{U}{mathc}{m}{it}%
{<->s*[1.03] mathc10}{}
\DeclareMathAlphabet{\mathcal}{U}{mathc}{m}{it}
\begin{document}

\title{Compact moduli of elliptic surfaces with a multiple fiber}
\author{Donggun Lee and Yongnam Lee}

\address {Center for Complex Geometry\\
Institute for Basic Science (IBS)\\
55 Expo-ro, Yuseong-gu\\ 
Daejeon, 34126 Korea}
\email{dglee@ibs.re.kr}

\address {Center for Complex Geometry\\
Institute for Basic Science (IBS)\\
55 Expo-ro, Yuseong-gu\\ 
Daejeon, 34126 Korea}
\email{ynlee@ibs.re.kr}

\thanks{MSC 2010: 14J10, 14J27\\ 
Key words: rational elliptic surface, Dolgachev surface, moduli space, $\QQ$-Gorenstein smoothing}
\date{September 9, 2025.}

\begin{abstract}
Motivated by Miranda and Ascher--Bejleri's works on compactifications of the moduli space of rational elliptic surfaces with a section, we study constructions and boundaries of compact moduli spaces of elliptic surfaces with a multiple fiber. Particular emphasis is placed on rational elliptic surfaces without a section and on Dolgachev surfaces. Our main goal is to understand the limit surfaces when a multiple fiber degenerates into an additive type singular fiber, via $\mathbb{Q}$-Gorenstein smoothings of slc surfaces.
\end{abstract}

\maketitle

\section{Introduction}

Constructions of elliptic surfaces starting from an elliptic surface with a section are well-established. Such procedures require either some knowledge of \'etale cohomology (Ogg--Shafarevich theory) or working with complex analytic surfaces (logarithmic transforms) which is not necessarily algebraic. For a discussion of the first method, see \cite[Chapter 4]{CDL} and for the second see \cite[Chapter 5]{BHPV}. Compactification of moduli of rational elliptic surfaces was established by Miranda using GIT \cite{Mir80} \cite{Mir81}. Also recently, Ascher and Bejleri \cite{AB21} used the log minimal model program to construct compact moduli spaces parameterizing weighted stable elliptic surfaces which have elliptic fibrations with a section and marked fibers each weighted between zero and one. Motivated by Miranda and Ascher--Bejleri's works on compactifications of moduli space of rational elliptic surfaces with a section, in this paper we try to establish some compactifications of moduli space of elliptic surfaces with a multiple fiber. To avoid some technical difficulties and to stay within the category of algebraic surfaces, we restrict to special elliptic surfaces whose Jacobian surfaces are rational and which has at most two multiple fibers with relatively prime multiplicities.

\medskip
We say that a smooth projective rational surface $X$ is a \emph{rational elliptic surface} if $X$ admits a relatively minimal fibration $p: X\to\PP^1$ whose generic fiber is an elliptic curve. If $X$ is a rational elliptic surface, then there exists an integer $m\ge 1$, called the \emph{index} of the fibration, such that $p$ is given by  $\lvert-mK_X\rvert$. Moreover, $m = 1$ if and only if $X\to\PP^1$ admits a global section and whenever $m > 1$ there exists exactly one multiple fiber in $X$, of multiplicity $m$. 
Rational elliptic surfaces of index $m$ can be realized as the blow-up of $\PP^2$ at nine points, where the nine points are base points of a Halphen pencil (of index $m$) \cite[Section~4.9]{CDL}; these are pencils of plane curves of degree $3m$ having nine (possibly infinitely near) base points of multiplicity $m$. The unique multiple fiber corresponds to the cubic curve through nine base points. Furthermore, any rational elliptic surface of index $m$ carries an $m$-multi-section, which can be chosen as the exceptional divisor of over the ninth point. We refer to such a pair a \emph{marked rational elliptic surface of index $m$}. 

A \emph{Dolgachev surface} is an elliptic surfaces of Kodaira dimension one with two multiple fibers of multiplicities $p$ and $q$ where $p$ and $q$ are relatively prime. Let $\pi: X\to\PP^1$ be rational elliptic surface of index 1 and let $(p, q)$ be a pair of natural numbers. We form a new algebraic surface $S(p,q)$ by performing logarithmic transforms on two of the smooth or multiplicative fibers of $\pi$, one of order $p$ and the other of order $q$. Then there is an elliptic fibration $\pi: S(p, q) \to \PP^1$ with two multiple fibers of multiplicities $p$ and $q$. Dolgachev \cite{Dol} showed that if $(p, q) = 1$, then $S(p, q)$ is simply connected. We shall use the term \emph{Dolgachev surface of the type $(p, q)$} to mean an algebraic surface of the form $S(p, q)$ where $p$ and $q$ are relatively prime and $p,q > 1$.

\medskip

Zanardini \cite{Zar} studied the problem of classifying pencils of plane sextics up to projective equivalence via GIT and provided a complete description of the GIT stability of certain pencils of plane sextics called Halphen pencils of index two. This study gives a compactification of moduli of rational elliptic surfaces with a multiple fiber of multiplicity 2. However, as the multiplicity increases it becomes very difficult to describe semi-stable objects via GIT computation. Additionally, Dolgachev surfaces are constructed by the non-global method of performing logarithmic  transforms at two of the smooth or multiplicative fibers of rational elliptic surfaces with a section. Only very special Dolgachev surfaces of type $(2, 3)$ whose Jacobian surfaces are extremal rational elliptic surfaces have been explicitly constructed. In general, Dolgachev surfaces have not been explicitly described beyond such specific constructions, which makes it difficult to study their degenerations.

\medskip
Let $\pi: X \to\PP^1$ be a rational elliptic surface, and let $x, y\in \PP^1$ be two points such that the fibers $\pi^{-1}(x) =F_x$ and $\pi^{-1}(y) =F_y$  are either smooth or singular fibers of multiplicative type. The construction of a Dolgachev surface $S(p, q)$ from $X$ depends on a choice of two line bundles $\eta_x\in {\rm Pic}(F_x), \eta_y\in {\rm Pic}(F_y)$ of orders exactly $p$ and $q$ respectively. Since ${\rm Br}(X)=0$, there is a unique elliptic surface $S(p, q) \to\PP^1$ with multiple fibers at $x$ and $y$, with associated invariants $\eta_x$ and $\eta_y$, that is isomorphic to $X$ over $\PP^1\setminus \{x, y\}$. 

Let $T$ denote the space of quintuples: $T= \{(X, x, y, \eta_x, \eta_y)\}$
subject to the conditions that $X$ is a rational elliptic surface, $\pi: X \to\PP^1$ is the projection, $x\ne y$ are points of $\PP^1$, $F_x$ and $F_y$ are smooth fibers or singular fibers of multiplicative type of $\pi$, $\eta_x\in{\rm Pic}(F_x)$ is a $p$-torsion point and $\eta_y\in{\rm Pic}(F_y)$ is a $q$-torsion point.

\begin{Prop}\cite[Proposition~3.8]{FM88} For given relatively prime integers $p$ and $q$, the moduli space of Dolgachev surfaces with multiple fibers of multiplicities $p$ and $q$ is irreducible, i.e. there exists an irreducible complex space $T$ (which may be assumed smooth) and a proper smooth map $\Phi: \cX\to T$ such that every Dolgachev surface $S(p,q)$ is isomorphic to $\Phi^{-1}(t)$ for some $t\in T$.
\end{Prop}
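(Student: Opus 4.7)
The plan is to realize $T$ as a tower of natural fibrations over the moduli space $\Mm$ of Jacobian (index one) rational elliptic surfaces, verifying irreducibility at each stage, and then to obtain $\Phi:\cX\to T$ by performing a relative logarithmic transform. Recall that $\Mm$ is irreducible and smooth: it can be described as a Zariski open subset of the parameter space of Weierstrass data over $\PP^1$, equivalently of the space of cubic pencils. Let $\Ss\to\PP^1\times\Mm$ be the universal family and let $U\subset\PP^1\times\Mm$ be the open locus over which the fiber is smooth or of multiplicative type; being the complement of the codimension-one additive discriminant, it is irreducible. The fiber product $U\times_\Mm U\setminus\De$ parameterizes triples $(X,x,y)$ satisfying the conditions on $x,y$ in the definition of $T$, and it is again irreducible.

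Over this base, let $\mathcal{P}_p\to U\times_\Mm U\setminus\De$ denote the subscheme of the relative $\Pic(F_x)$ consisting of classes of exact order $p$; over smooth fibers it is étale of degree $p^2-1$, and over the multiplicative locus it remains étale (the $p$-torsion in the $\mathbb{G}_m$ part of $\Pic^0(F_x)$ together with any $p$-torsion classes from the component group). The key step, which I would regard as the main obstacle, is to show that $\mathcal{P}_p$ is irreducible. This is a monodromy statement: one needs the image of $\pi_1$ of a dense open subset of the base, acting on $H^1(F_x,\ZZ/p)\cong(\ZZ/p)^2$, to contain $SL_2(\ZZ/p)$, so that it permutes the nonzero $p$-torsion vectors transitively. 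I would prove this by restricting to a sufficiently general one-parameter subfamily of cubic pencils, locating its nodal degenerations, and arguing that the Picard--Lefschetz transvections around them already generate $SL_2(\ZZ/p)$. The analogous cover $\mathcal{P}_q$ attached to $\eta_y$ is then handled in the same way; because $\gcd(p,q)=1$, the fiber product $\mathcal{P}_p\times_{U\times_\Mm U\setminus\De}\mathcal{P}_q=T$ remains irreducible.

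To produce $\Phi:\cX\to T$, perform the classical analytic logarithmic transform of order $p$ along $x$ with datum $\eta_x$ and of order $q$ along $y$ with datum $\eta_y$. Étale-locally on $T$ this construction is standard, and the vanishing ${\rm Br}(X)=0$ for rational $X$ guarantees that the local models are unique up to isomorphism over $\PP^1$, so they descend to a global smooth proper morphism $\Phi:\cX\to T$ whose fibers are the Dolgachev surfaces $S(p,q)$ with the prescribed data. Smoothness of $T$ can be arranged by shrinking to a Zariski open, or read off directly from the tower of étale maps over the smooth base $\Mm$; alternatively one appeals to the classical unobstructedness of deformations of Dolgachev surfaces.
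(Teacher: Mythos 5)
The paper offers no proof of its own---it simply cites \cite[Proposition~3.8]{FM88}---and your construction (a tower of relative torsion covers over the irreducible moduli space of Jacobian rational elliptic fibrations, irreducibility via transitive $SL_2(\ZZ/p)$-monodromy on $p$-torsion, and a relative logarithmic transform glued using ${\rm Br}(X)=0$) is essentially the argument of that reference and of the discussion the paper gives just before the Proposition. One correction: over a multiplicative fiber $\Pic^0(F_x)\cong\mathbb{G}_m$ has only $p$ points of order dividing $p$, so $\mathcal{P}_p$ is \emph{not} \'etale there (and over the smooth locus its degree is the number of exact-order-$p$ points, which is $p^2-1$ only for $p$ prime); this does not damage the proof, since irreducibility of $\mathcal{P}_p$ already follows from transitivity of the monodromy over the dense open locus of smooth fibers together with the fact that the surviving torsion classes on a nodal fiber are specializations of monodromy-invariant torsion classes on nearby smooth fibers.
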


Let $\pi: S(2m_1, q)\to\PP^1$ be a Dolgachev surface of the type $(2m_1, q)$. Then one can construct a rational elliptic surface $J^{m_1q}(S(2m_1, q))$ with a multiple fiber with multiplicity 2. The generic fiber of $J^{m_1q}(S(2m_1, q))\to \PP^1$ parametrizes line bundles of degree $m_1q$ on the generic fiber of $S(2m_1, q)$. This surface comes with a natural involution determined by the multisection, and it has exactly one multiple fiber of multiplicity two \cite[Section~1 in Part II]{Fri}.
By this correspondence, the moduli of Dolgachev surfaces $S(p,q)$ with $p$ even has one-dimensional fibers over moduli of rational elliptic surfaces of index 2, and has two-dimensional fibers over the moduli of rational elliptic surfaces of index 1. 

Let $X$ be the Jacobian surface associated to $S(p, q)$. If $X$ has no additive singular fibers, then for any two distinct points $x\ne y$ one can construct $S(p, q)$ by performing logarithmic transforms at $F_x$ and $F_y$ by using their torsions. In the case $x=y$, one may consider a non-normal surface $S_0$ which is the semi-log-canonical (slc) union of a rational elliptic surface of index $p$ and a rational elliptic surface of index $q$ glued along a non-multiple smooth elliptic curve. It is easy to check that $S_0$ admits a $\QQ$-Gorenstein smoothing to a Dolgachev surface of the type $(p, q)$. 

Therefore, when studying degenerations of Dolgachev surfaces, the main difficulty arises when a multiple fiber degenerates into an additive singular fiber. Our goal in this paper is to understand the limit surfaces that appear when a multiple fiber degenerates into an additive singular fiber in an elliptic surface with $p_g=q=0$.

\medskip

In Section~\ref{prel}, we briefly review the works of Miranda and Ascher-Bejleri's on compactifications of the moduli spaces of elliptic surfaces with a section from our perspective. 

In Section~\ref{stable}, using Birkar's recent work \cite{Bir}, we construct a projective moduli space of marked rational elliptic surfaces of index $m$.
We also show that there is a $pq$-multi-section on $S(p,q)$. Hence, whenever such a multi-section deforms flatly in a family of Dolgachev surfaces, Birkar's result yields a projective moduli space of marked Dolgachev surfaces.

In Section~\ref{moderate}, we introduce Kawamata's work \cite{Kaw} on the classification of possible singularities of the central fiber of a moderate (=permissible degeneration with a normal central fiber) degeneration of elliptic surfaces with non-negative Kodaira dimension. In a moderate degeneration, singular fiber types $mI_0, I_b^*, II^*, III^*, IV^*$ do not occur on the central fiber. For Dolgachev surfaces, the converse of the classification theorem is also true. Consider a relatively minimal smooth rational elliptic surface $Y$ with a section. Assume $Y$ has at least two singular fibers of type $I_n, II, III, IV$. Then one can construct a normal rational elliptic surface $X$ by replacing these two singular fibers to singular fibers in Kawamata's theorem \cite[Theorem~4.2]{Kaw} by blowing-ups and Artin's contractibility theorem \cite[Theorem~2.3]{Art}.  Then by the method in \cite[Section~2]{LP}, or \cite[Section~6]{LN}, or \cite[Section~2]{CL}, we obtain a $\QQ$-Gorenstein smoothing $f: \cX\to\De$ such that $f^{-1}(0)=X$ and a general fiber $X_t$ is a Dolgachev surface. The advantage of constructing Dolgachev surfaces with $\QQ$-Gorenstein smoothings is that it is extendable to arbitrary positive characteristic.

In Section~\ref{limit}, we describe the limit surfaces when a multiple fiber goes to an additive singular fiber in an elliptic surface with $p_g=q=0$. As mentioned in Section~\ref{prel}, Miranda \cite{Mir80} constructed a compactification of moduli of rational elliptic surfaces of index 1 by GIT. In his compactification, it is enough to understand limits of multiple fibers to singular fibres of type $I_n, II, III, IV$ and two $I_0^*$ of elliptic surfaces. By using Theorem~\ref{smoothing}, Theorem~\ref{gluing}, and Corollary~\ref{gluing2}, we provide their limit surfaces in stable compactification of moduli spaces. If the multiplicity of multiple fiber is $\le 5$ then by Lemma~\ref{lct} a wall-crossing phenomenon in compact moduli spaces occurs as one varies $0< c\le 1$ for the pair $(X, cB)$.

\medskip

{\bf Acknowledgements.}
This research is supported by the Institute for Basic Science (IBS-R032-D1). The authors would like to thank Igor Dolgachev, Keiji Oguiso, and Guolei Zhong for their valuable comments during this work.

\section{Preliminaries}\label{prel}

{\bf Terminology and Notation}
We adopt the standard terminology and notation as in \cite{Har}. We refer to \cite{KM} for the definitions of singularities, and to \cite[Chapter V, Section 7]{BHPV} for Kodaira's table of singular fibers.

\medskip

We now briefly introduce Miranda's work \cite{Mir80} \cite{Mir81}, and Ascher and Bejleri's work \cite{AB21} \cite{AB22} from our perspective.

Let $V$ be the vector space of sections $\Gamma(\PP^2, \cO_{\PP^2}(1))$. Then the projective space $\PP^9=\PP(S^3 V^*)$ is the parameter space for cubic curves in $\PP^2$. Let ${\rm Gr}(1, 9)$ be the Grassmannian  of lines in this $\PP^9$; a point of ${\rm Gr}(1, 9)$ then corresponds to a pencil of plane cubic curves.  ${\rm Gr}(1, 9)$ is naturally embedded in the projective space
$\PP^{44}= \PP(\Lambda^2S^3 V^*)$ via the Pl\"uker coordinates. The automorphism group $PGL(3)$ of $\PP^2$ acts naturally on the space ${\rm Gr}(1, 9)$ of all cubic pencils. In \cite{Mir80}, Miranda gave an explicit geometric characterization of the GIT stability of pencils with smooth members by considering the elliptic surface $X_P$ associated to such a pencil $P$ ($X_P$ is obtained by blowing up $\PP^2$ at the base points of the pencil $P$).

\begin{Thm}\cite{Mir80}
Let $P$ be a pencil of cubic curves in $\PP^2$. 
\begin{enumerate}
    \item $P$ is stable if and only if $P$ contains a smooth member and every fiber of $X_P$ is reduced.
    \item If $P$ contains a smooth member, then $P$ is semi-stable if and only if $X_P$ contains no fibers of type $II^*, III^*$, or $IV^*$.
\end{enumerate}
\end{Thm}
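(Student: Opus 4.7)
The plan is to apply the Hilbert--Mumford numerical criterion to the $PGL(3)$-action on $\PP^{44}=\PP(\Lambda^2 S^3V^*)$. Every non-trivial one-parameter subgroup of $PGL(3)$ is conjugate to a diagonal one, so we may fix coordinates $(x_0,x_1,x_2)$ on $V$ and take $\lambda$ to have weights $(a_0,a_1,a_2)$ with $\sum a_i=0$. Assign to the cubic monomial $x_0^i x_1^j x_2^k$ the weight $w_{ijk}=ia_0+ja_1+ka_2$. For $P=\langle F,G\rangle\subset S^3V^*$, the Mumford weight $\mu(P,\lambda)$ equals (up to the usual sign) the extremum of $w_I+w_J$ taken over pairs of distinct monomials $(x^I,x^J)$ whose associated $2\times 2$ minor of the coefficient matrix of $[F,G]$ is non-zero; equivalently, if $F_1,F_2$ is a basis of $P$ whose leading monomials (of extremal weight with respect to $\lambda$) are $x^{I_1}$ and $x^{I_2}$, then $\mu(P,\lambda)$ is determined by $w_{I_1}+w_{I_2}$. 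The Hilbert--Mumford criterion then says that $P$ is stable (resp.\ semi-stable) iff, for every non-trivial $\lambda$, the relevant inequality on $w_{I_1}+w_{I_2}$ holds strictly (resp.\ weakly).

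For the ``if'' direction of (1), assume $P$ contains a smooth cubic $C_0$ and every fiber of $X_P$ is reduced. Given a diagonal $\lambda$ with ordered weights $a_0\ge a_1\ge a_2$, the data of $\lambda$ picks out the flag $[0{:}0{:}1]\subset\{x_0=0\}\subset\PP^2$ on which the weight grading concentrates. Reducedness of all fibers forbids $P$ from containing any cubic with a repeated component (triple line, double line plus line, or conic plus double line) and limits how badly a member can be singular at a base point; smoothness of $C_0$, combined with Mumford's classical stability criterion for smooth plane cubics, ensures $C_0$ contributes monomials of both extremal weights. A case analysis on the weight-supports of $F$ and $G$ then exhibits two independent elements of $P$ whose leading weights fulfill the strict stability inequality, giving $\mu(P,\lambda)>0$.

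For the converse directions, I would construct an explicit destabilizing 1-PS in each failure mode. If $X_P$ has a non-reduced fiber, the corresponding cubic member $C_\ast\in P$ carries a repeated component (or a singularity concentrated at a base point); choosing coordinates so that the repeated component is $\{x_0=0\}$, a suitable diagonal $\lambda$ forces every monomial appearing in $P$ into one half of the weight filtration, giving $\mu(P,\lambda)\le 0$, so $P$ fails to be stable. The distinction between $\mu=0$ and $\mu<0$ then matches the distinction between Kodaira fiber types: the $\widetilde E_n$-type fibers $II^*,III^*,IV^*$ arise from cubic members whose singular structure is so rigidly tied to the base-point configuration that the destabilizing inequality becomes \emph{strict}, yielding genuine instability, whereas $I_n^*$ and $mI_n$ fibers achieve only equality, producing strict semi-stability. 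If $P$ contains no smooth member, every element of $P$ lies in the discriminant hypersurface of plane cubics, and a common singular direction of the pencil again yields a destabilizing $\lambda$ with strict inequality.

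The main obstacle will be establishing the precise dictionary between Kodaira fiber types of $X_P$ and normal forms of the corresponding cubic members of $P$, together with the configuration of base points on each singular cubic. Via Kodaira's table (recalled in the paper) and a case-by-case analysis of singular plane cubics up to $PGL(3)$, the list of possibilities is finite; once identified, the Hilbert--Mumford verification reduces to a handful of explicit numerical checks. The delicate point is matching the boundary case $\mu=0$ versus $\mu<0$ exactly with the separation of $\widetilde E_n$-type from the other non-reduced fiber types, which requires a careful base-point analysis distinguishing, e.g., a generic cuspidal cubic (giving a type $II$ fiber) from one with prescribed infinitely-near base-point contact (giving $II^*$).
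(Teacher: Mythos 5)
This statement is quoted from Miranda's paper \cite{Mir80}; the present paper gives no proof of it (nor is it expected to), so there is no internal argument to compare against. Your proposal is, in outline, exactly Miranda's method: linearize the $PGL(3)$-action on ${\rm Gr}(1,9)\subset\PP(\Lambda^2 S^3V^*)$ via the Pl\"ucker embedding, reduce to diagonal one-parameter subgroups, and read off $\mu(P,\lambda)$ from the extremal weights $w_{I_1}+w_{I_2}$ of a basis of the pencil adapted to the weight filtration. So the strategy is the right one.

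The difficulty is that, as written, the proposal defers essentially all of the mathematical content. The entire substance of Miranda's theorem is the ``dictionary'' you flag as the main obstacle: which configurations of (possibly infinitely near) base points on which singular cubic members produce which Kodaira fibers of $X_P$, and which of those configurations admit a destabilizing $\lambda$ with $\mu\le 0$ versus $\mu<0$. Saying that ``a case analysis then exhibits two independent elements whose leading weights fulfill the strict inequality'' is not a proof step; it is a restatement of the theorem. One concrete point where your sketch is actually misleading: you tie non-reducedness of a fiber of $X_P$ to a cubic member with a repeated component. That is not the correct correspondence. Non-reduced fibers (in particular all of $I_n^*$, $II^*$, $III^*$, $IV^*$) typically arise from \emph{reduced} singular cubics --- e.g.\ a cuspidal cubic, a conic plus tangent line, or three concurrent lines --- on which the nine base points degenerate into a chain of infinitely near points, so that exceptional divisors enter the fiber with multiplicity $\ge 2$. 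Consequently the destabilizing one-parameter subgroup must be chosen adapted to the flag determined by the base-point scheme (the point supporting the infinitely near cluster and the distinguished tangent direction), not merely to a repeated linear component of some member; and the numerical separation of $\mu=0$ (the $I_n^*$ cases, strictly semi-stable) from $\mu<0$ (the $\widetilde E_n$ cases, unstable) comes out of weighting that flag and counting base-point multiplicities. Until that classification and the accompanying weight computations are actually carried out, both directions of (1) and (2) remain unproved; I would also ask you to justify separately the claim that a pencil with no smooth member is non-stable, which requires Bertini to locate a base point at which the generic member is singular before you can even choose the flag for your destabilizing $\lambda$.
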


He also characterized strictly semi-stable pencils containing a smooth member. 

\begin{Thm}\cite[Theorems~8.1 and~8.5]{Mir80}
Suppose that $P$ 
contains a smooth member.
\begin{enumerate}
    \item $P$ is strictly semi-stable if and only if $X_P$ contains a fiber of type $I_n^*$.
    \item Let $P$ be strictly semi-stable. Then, the orbit of $P$ is closed if and only if $X_P$ contains two singular fibers of type $I_0^*$.
\end{enumerate}
\end{Thm}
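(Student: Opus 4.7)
For (1) I would argue as follows. Since $P$ contains a smooth member $C$, the pencil has nine (possibly infinitely near) simple base points through which $C$ passes, so the exceptional divisor over any such base point is a section of the induced fibration $X_P\to\PP^1$. In particular the fibration has index $1$ and admits no multiple smooth fiber $mI_0$. The non-reduced Kodaira types are $mI_0, I_n^*, II^*, III^*, IV^*$, so if $P$ is strictly semi-stable the previous theorem excludes $II^*,III^*,IV^*$ while the section excludes $mI_0$, leaving $I_n^*$ as the only option. Conversely, if $X_P$ carries an $I_n^*$ fiber, then $\chi(X_P)=12$ together with the Euler contributions of the Kodaira types (namely $10, 9, 8$ for $II^*, III^*, IV^*$ respectively, and $n+6\ge 6$ for $I_n^*$) leaves no room for $II^*, III^*, IV^*$; hence $P$ is semi-stable by the previous theorem and evidently not stable.

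For (2) I would invoke the fact that the closure of every strictly semi-stable $PGL(3)$-orbit contains a unique closed (polystable) orbit inside the semi-stable locus, and proceed via the Hilbert--Mumford numerical criterion together with explicit normal forms. For the ``if'' direction, I would put $P$ into a normal form in which the two $I_0^*$ fibers sit over $0,\infty\in\PP^1$ and $P$ is spanned by two cubics $f_0, f_\infty$ that are eigenvectors of opposite character for an explicit one-parameter subgroup $\lambda(t)\subset PGL(3)$; verifying that any other $1$-PS of $PGL(3)$ either fixes $P$ or transports it to a pencil with a different Kodaira fiber configuration then shows that the orbit of $P$ is closed. For the ``only if'' direction, suppose $X_P$ does not have two $I_0^*$ fibers; by (1) and an Euler-characteristic check, the only non-reduced fiber of $X_P$ is then a single $I_n^*$. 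I would exhibit an explicit $1$-PS $\lambda(t)\subset PGL(3)$ obtained by rescaling coordinates which concentrates the reduced singular members of $P$ together with (when $n\ge 1$) part of the configuration supporting the $I_n^*$, so that in the limit a second $I_0^*$ fiber appears. The limit $P_0$ then has two $I_0^*$ fibers, so by the ``if'' direction lies in a closed orbit, and since its Kodaira fiber type differs from that of $P$ it is not in the $PGL(3)$-orbit of $P$; hence the orbit of $P$ is not closed.

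The main obstacle will be the explicit construction of the $1$-PS degeneration in the ``only if'' direction, in particular producing a second $I_0^*$ fiber in the limit out of the reduced (and possibly non-reduced) singular members of $P$. This requires a careful analysis of the cubics in $P$ and of how a rescaling of coordinates gathers their singular members, matched against the Euler-number bookkeeping $12 = 6+6$ for the degenerate elliptic surface. A parallel technical issue in the ``if'' direction is ruling out further $1$-PS degenerations away from the two-$I_0^*$ normal form, which amounts to showing that such a pencil is rigid under $\mathbb{G}_m$-actions once the positions of the two $I_0^*$ fibers on $\PP^1$ are fixed, and can be settled by a Luna-slice or Kempf--Ness argument.
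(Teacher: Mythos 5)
This statement is quoted background: the paper gives no proof of it, citing Miranda \cite[Theorems~8.1 and~8.5]{Mir80}, so there is no in-paper argument to compare yours against; I can only assess your proposal on its own terms. Your part (1) is essentially complete and correct as a \emph{derivation from the previously displayed theorem}: a pencil with a smooth member yields a rational elliptic surface with a section, hence no multiple fibers, so ``not stable'' forces a non-reduced fiber of type $I_n^*$, $II^*$, $III^*$ or $IV^*$, and semistability excludes the last three; conversely $e(X_P)=12$ together with $e(I_n^*)=n+6$ and $e(II^*),e(III^*),e(IV^*)=10,9,8$ rules out coexistence, so an $I_n^*$ fiber forces strict semistability. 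That is a legitimate, self-contained deduction (Miranda himself proves 8.1 directly from the numerical criterion, but your route is logically sound given the earlier theorem).

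Part (2), however, is a plan rather than a proof, and the gap is exactly where you locate it. Everything of substance --- the normal form for a pencil with two $I_0^*$ fibers, the verification that no one-parameter subgroup moves such a pencil out of its orbit within the semistable locus, and the explicit destabilizing $1$-PS that produces a second $I_0^*$ in the limit of a pencil whose only non-reduced fiber is a single $I_n^*$ --- is asserted as something you ``would'' do, not done. These are precisely the computations occupying Sections~7--8 of \cite{Mir80}: one writes the two-$I_0^*$ pencils as $\lambda\, x(y^2 - e_1 z^2)(\dots) + \mu\,(\dots)$-type normal forms fibered over the $j$-line and checks minimality of the orbit, and for the ``only if'' direction one degenerates along a torus rescaling adapted to the $I_n^*$ configuration. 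Your framework (Kempf/Hilbert--Mumford: the closure of a non-closed semistable orbit contains its unique closed orbit, reachable by a $1$-PS) is the correct one, and the Euler-number bookkeeping $12=6+6$ correctly predicts the shape of the answer, but until the two $1$-PS analyses are actually carried out the proof of (2) is incomplete. If your intent is only to use this theorem as the paper does, a citation suffices; if you intend to reprove it, part (2) still needs the explicit GIT computation.
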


Miranda  \cite{Mir81} also used GIT to construct a coarse moduli space of Weierstrass fibrations. These fibrations arise naturally as follows: let $p: X\to C$ be a rational elliptic surface 
with a section $S$. One obtains a normal surface, called the Weierstrass fibration associated to $X\to C$ by contracting all the components of the fibers of $p$ which do not meet $S$. This fibration has only rational double point singularities, and is uniquely determined by $X$. 

Let $\Gamma_n = \Gamma(\PP^1, \cO_{\PP^1}(n))$. The Weierstrass fibration of a relatively minimal elliptic surface $X\to\PP^1$ is the closed subscheme of $\PP(\cO_{\PP^1}(2N) \oplus\cO_{\PP^1}(3N) \oplus\cO_{\PP^1})$ defined by the equation $y^2z=x^3+Axz^2+Bz^3$, where $A\in \Gamma_{4N}$, $B \in\Gamma_{6N}$,  $N={\rm deg}\, (p_*\omega_{X/\PP^1})$, and
\begin{enumerate}
    \item[(i)] $4A(q)^3 + 27B(q)^2 = 0$ precisely at the (finitely many) singular fibers $X_q$, and
    \item[(ii)] for each $q\in \PP^1$ we have $\nu_q(A)\le 3$, or $\nu_q(B)\le 5$.
\end{enumerate}



Let $T\subset \Gamma_{4N}\oplus \Gamma_{6N}$ be the open set of pairs $(A,B)$ satisfying (i) and (ii) above. Given a Weierstrass fibration $X\to\PP^1$, the pair $(A, B)$ is unique up to isomorphism in the following sense. The multiplicative group $\CC^*$ acts on $T$ by $\lambda\cdot (A, B)= (\lambda^{4N}A, \lambda^{6N}B)$. The group $SL(2)$ acts on $T$ in the obvious manner because $\Gamma_n=S^n\Gamma_1$. 

\smallskip

Let $W$ denote the GIT quotient, and let $W_{sss}$ denote the strictly semistable locus.

\begin{Thm}\cite[Theorem~6.2, Proposition~8.2, Theorem~8.3]{Mir81}
Let $X$ be a rational Weierstrass fibration (i.e. $N=1$), and let $\tilde X$ be the associated elliptic surface. 
Then,
\begin{enumerate}
    \item $X$ is stable if and only if $X$ has a smooth generic fiber and 
    $\tilde X$ has only reduced fibers;
    \item $X$ is strictly semi-stable if and only if $\tilde X$ has a fiber of type $I_n^*$ for some $n\ge 0$;
    \item two strictly semi-stable elliptic surfaces correspond to the same point in $W_{sss}$ if and only if the $j$-invariant of the $I_n^*$ fibers are the same.
\end{enumerate}
\end{Thm}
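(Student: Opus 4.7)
The plan is to apply the Hilbert--Mumford numerical criterion to the action of $SL(2)\times\CC^{*}$ on the affine variety $T\subset\Gamma_{4}\oplus\Gamma_{6}$ of Weierstrass pairs $(A,B)$. Every one-parameter subgroup of $SL(2)$ is conjugate to a diagonal one $\lambda(s)=\mathrm{diag}(s^{-1},s)$ in a chosen basis $(u,v)$ of $\Gamma_{1}$, and such a $\lambda$ fixes a point $q=[0:1]\in\PP^{1}$ together with its antipode. Writing $A=\sum a_{i}\,u^{i}v^{4-i}$ and $B=\sum b_{j}\,u^{j}v^{6-j}$, the minimum weight of $\lambda$ on the nonvanishing monomials of $A$ equals $4-2\nu_{q}(A)$ and that of $B$ equals $6-2\nu_{q}(B)$. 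After normalizing by the natural $\CC^{*}$ scaling $(A,B)\mapsto(\lambda^{4}A,\lambda^{6}B)$, the Hilbert--Mumford weight takes the form
\begin{equation*}
\mu\bigl((A,B),\lambda\bigr)=\min\left\{\tfrac{4-2\nu_{q}(A)}{4},\ \tfrac{6-2\nu_{q}(B)}{6}\right\},
\end{equation*}
and $(A,B)$ is semistable (resp.\ stable) if and only if $\mu\leq 0$ (resp.\ $<0$) for every such $q\in\PP^{1}$.

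\medskip

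The second step is to convert these numerical inequalities into singular fiber types via the Kodaira--Tate table, which reads off the type of the fiber of $\tilde X$ over $q$ from the pair $(\nu_{q}(A),\nu_{q}(B))$. A case-by-case check shows that the reduced fibers $I_{n},II,III,IV$ all satisfy $\mu<0$, that type $I_{n}^{*}$ realizes exactly the boundary case $\mu=0$, and that $II^{*},III^{*},IV^{*}$ force $\mu>0$ and therefore destroy semistability. Combined with the observation that the generic fiber is smooth if and only if the discriminant $4A^{3}+27B^{2}$ does not vanish identically on $\PP^{1}$, this immediately yields assertions (1) and (2).

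\medskip

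For assertion (3), the strategy is to analyze orbit closures within the semistable locus $T^{ss}$. Starting from any Weierstrass fibration $\tilde X$ containing an $I_{n}^{*}$ fiber at some $q$, apply the 1-PS $\lambda$ fixing $q$ and its antipode: as $s\to 0$ the monomials of higher $u$-degree in $A$ and $B$ drop out, producing a limit Weierstrass fibration whose fiber over $q$ remains of type $I_{n}^{*}$, since its leading coefficients survive. Iterating with 1-PS centered at other points of $\PP^{1}$ and appealing to the Luna-type criterion for closed orbits, one degenerates the whole pair into a normal form with exactly two $I_{0}^{*}$ fibers at the two fixed points of some $\CC^{*}\subset SL(2)$. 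Since the $j$-invariant of an $I_{n}^{*}$ fiber is preserved along each such degeneration, and since a two-$I_{0}^{*}$ configuration is uniquely determined up to the residual symmetry by the $j$-invariant of its smooth fibers, one obtains an injection $W_{sss}\hookrightarrow\CC$ via the $j$-invariant, giving (3).

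\medskip

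The main obstacle will be completing step (3), specifically verifying that a two-$I_{0}^{*}$ Weierstrass fibration has \emph{closed} $SL(2)\times\CC^{*}$-orbit in $T^{ss}$ and that the $j$-invariant is a complete invariant of the orbit. Closedness requires ruling out any further 1-PS degeneration: once both vanishing pairs at the antipodal fixed points of $\lambda$ are exactly $(\nu(A),\nu(B))=(2,3)$, any further drop would force $(\nu_{q}(A),\nu_{q}(B))\geq(4,6)$ somewhere on $\PP^{1}$, violating condition (ii) defining $T$. Completeness of the $j$-invariant then follows by bringing the two-$I_{0}^{*}$ pair into an explicit one-parameter normal form using the $SL(2)\times\CC^{*}$ symmetry, placing the two $I_{0}^{*}$ fibers at $0$ and $\infty$ and rescaling, and identifying the residual parameter with the $j$-invariant of the generic fiber.
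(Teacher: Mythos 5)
This theorem is not proved in the paper at all: it is quoted as a citation of Miranda \cite[Theorem~6.2, Proposition~8.2, Theorem~8.3]{Mir81}, so there is no in-paper argument to compare against. Your sketch is essentially a reconstruction of Miranda's original proof --- the Hilbert--Mumford analysis of the $SL(2)\times\CC^*$-action on pairs $(A,B)$, the translation of the vanishing orders $(\nu_q(A),\nu_q(B))$ into Kodaira fiber types via the Tate table, and the orbit-closure analysis identifying the closed strictly semistable orbits --- so the route is the intended one, not a new one.

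Two points in your step (3) are stated incorrectly and need repair, though both are fixable. First, the limit of $(A,B)$ under the one-parameter subgroup fixing $q=[0:1]$ and its antipode is the pair of leading monomials $(a_2u^2v^2,\,b_3u^3v^3)$; its fiber over $q$ does \emph{not} ``remain of type $I_n^*$'' for $n\ge 1$ --- the limit has $I_0^*$ at both fixed points when $4a_2^3+27b_3^2\ne 0$, and what is actually preserved is only the $j$-invariant, which is all you need. Second, an $I_n^*$ fiber with $n\ge1$ has $\nu_q(\Delta)=n+6>6$, which forces $4a_2^3+27b_3^2=0$; the limit pair then has identically vanishing discriminant, i.e.\ no smooth generic fiber, so it violates condition (i) and lies outside the set $T$ as defined here. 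This is exactly the point $\infty$ in the stratification $W=W_s\sqcup\Aa^1\sqcup\infty$, and to make the orbit-closure argument literal one must take the quotient of a suitable closure of $T$ (as Miranda does, working projectively) rather than of $T$ itself. A minor further caveat: your formula for $\mu$ as a normalized minimum of the two weights is heuristic --- the combined $SL(2)\times\CC^*$ weight for the chosen linearization has to be computed honestly before the case-by-case check against the Kodaira table, and one should also verify that destabilizing data concentrated at a single point $q$ is the only way instability can occur. With those repairs the sketch matches the cited proof.
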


In particular, there is a stratification $W=W_s \sqcup \Aa^1 \sqcup \infty$ where $W_s$ denotes the stable locus and the strictly semi-stable locus is the $j$-line $\Aa^1 \sqcup \infty$, with $\Aa^1$ the $j$-invariant of the $I_0^*$ fibers and $\infty$ corresponding to $I_n^*$ for $n \ge 1$.

These results in \cite{Mir80} and \cite{Mir81} show which singular fibers should be allowed on elliptic surfaces at GIT stable locus and semi-stable locus.

\medskip

Recently, Ascher and Bejleri \cite{AB21} used the log minimal model program to construct compact moduli spaces parameterizing weighted stable elliptic surfaces which have elliptic fibrations with a section and marked fibers, each assigned a weight between zero and one. Moreover, they showed that the domain of weights admits a wall-and-chamber structure, described the induced wall-crossing morphisms on the moduli spaces as the weight vector varies, and identified the surfaces that appear on the boundary of the moduli space. In particular, when $N=1$, Ascher and Bejleri \cite{AB22} constructed a stable pair compactification of the moduli space of anti-canonically polarized degree one del Pezzo surfaces and related their constructions to Miranda’s GIT quotient \cite{Mir81}.

The blow-up of a degree one del Pezzo surface $\bar X$ at the base point of $\lvert-K_{\bar X} \rvert$ is a rational elliptic surface $X\to\PP^1$ with a section given by the exceptional divisor. Equivalently, $\bar X$ may be obtained as the blow-up of $\PP^2$ at 8 points in general position, and the anticanonical pencil is the unique pencil of cubics passing through these points. By the Cayley--Bacharach theorem, there is a unique 9-th point in the base locus of this pencil which becomes the base point of 
$\lvert-K_{\bar X}\rvert$.

\begin{Thm}\cite{AB22}
There exists a proper Deligne--Mumford stack $R$ parametrizing anticanonically polarized broken del Pezzo surfaces of degree one with the following properties:
\begin{itemize}
\item The interior $U\subset R$ parametrizes degree one del Pezzo surfaces with at worst rational double points (RDPs).
\item The complement $R \setminus U$ is a divisor consisting of 
2-Gorenstein semi-log-canonical (slc) surfaces with ample anticanonical divisor and exactly two irreducible components, or of isotrivial surfaces with $j$-invariant equal to infinity.
\item The locus $R^\circ\subset R$ parametrizing surfaces such that every irreducible component is normal, is a smooth Deligne--Mumford stack.
\end{itemize}
\end{Thm}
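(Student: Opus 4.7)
The plan is to set up $R$ as a moduli stack of KSBA-stable pairs in the sense of Koll\'ar--Shepherd-Barron and Alexeev, specialized to the anticanonically polarized Fano surface setting. Concretely, I would define the moduli functor as sending a scheme $S$ to the groupoid of flat proper families $\cX\to S$ whose geometric fibers are projective slc surfaces $\bar X$ with $K_{\bar X}^2=1$, $-K_{\bar X}$ ample and $\QQ$-Cartier of index dividing $2$, such that a general deformation has only rational double points. Boundedness of this class is forced by the fixed volume $(-K_{\bar X})^2=1$ together with the bounded singularity index, and this falls within Alexeev's boundedness results for slc Fano pairs. Once boundedness and the openness of the slc and $2$-Gorenstein conditions are in place, standard machinery for moduli of stable surfaces yields a Deligne--Mumford stack of finite type.

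To establish properness, I would verify the valuative criterion: given a one-parameter family $\cX^\circ\to\De^*$ of smooth del Pezzos of degree one over a punctured disc, perform semistable reduction and then run the relative $K_{\cX/\De}$-minimal model program to extract a canonical model over $\De$. The central fiber is then an slc surface with $-K$ ample, and uniqueness of the canonical model yields the existence and uniqueness part of the valuative criterion. The link with Miranda's GIT compactification \cite{Mir81} is used at this step: the anticanonical pencil $|-K_{\bar X}|$ induces a rational elliptic surface with section after blowing up the Cayley--Bacharach base point, so each point of $R$ corresponds functorially to a Weierstrass fibration, and the valuative analysis can be translated into Miranda's framework.

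To classify the boundary, I would stratify $R\setminus U$ by the type of degeneration produced by the MMP. If the central fiber remains normal, then it has strictly worse singularities than RDPs and, by the list of $2$-Gorenstein slc Fano surface singularities compatible with $K^2=1$, must be isotrivial with $j$-invariant $\infty$, matching the stratum $\infty$ in Miranda's decomposition $W=W_s\sqcup\Aa^1\sqcup\infty$ corresponding to $I_n^*$ limits. Otherwise the limit is non-normal; since the total space is $2$-Gorenstein and the general fiber is irreducible, the central fiber must split into exactly two irreducible components meeting along their conductor, each component being a normal $2$-Gorenstein slc surface with $-K$ ample relative to the double curve. Smoothness of $R^\circ$ then becomes a deformation-theoretic statement: for an slc union $\bar X_1\cup_D \bar X_2$ with normal components glued along a nodal curve $D$, the obstruction space vanishes because each $\bar X_i$ has only RDP or mild cyclic quotient singularities and the gluing along $D$ is log smooth, so the deformation theory of the pair is controlled by the deformations of the components and of the gluing data alone.

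The hard part will be pinning down the precise list of boundary strata, in particular ruling out configurations with three or more components and verifying that the only normal non-RDP limits are the isotrivial ones with $j=\infty$. This requires careful bookkeeping using Kawamata's classification of permissible singularities \cite{Kaw} (reviewed in the next section) together with the $2$-Gorenstein constraint forced by index dividing $2$, and using the identification with Weierstrass fibrations to import Miranda's GIT analysis of which fiber types survive at the semistable boundary.
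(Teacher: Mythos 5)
This theorem is imported into the paper from Ascher--Bejleri \cite{AB22} as a cited result with no proof given, so there is no internal argument to compare your sketch against; I can only measure it against what \cite{AB22} actually does. The central difficulty you underestimate is that the KSBA/Alexeev machinery you invoke (boundedness of slc surfaces of fixed volume, openness, valuative criterion via relative canonical models) is built for canonically polarized objects, i.e.\ pairs with $K_X+B$ ample. A moduli functor of slc surfaces with $-K$ ample is not separated: limits of Fano surfaces over a punctured disc are not unique, and running the relative $K$-MMP on a semistable model produces a canonically polarized central fiber, not an anticanonically polarized one, so the properness step as you describe it does not go through. Ascher--Bejleri circumvent this precisely by not working with the anticanonical polarization directly: they pass to the associated rational elliptic surface with section (blowing up the Cayley--Bacharach base point), form a weighted log pair whose log canonical divisor \emph{is} ample for suitable weights, apply the moduli theory of weighted stable elliptic surfaces from \cite{AB21} where KSBA-type existence, separatedness and properness do apply, and then recover the del Pezzo surface and its anticanonical polarization by contracting the section (this is the origin of the ``pseudofiber'' language in the companion statement quoted just below in the paper). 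The comparison with Miranda's GIT quotient \cite{Mir81}, which you correctly flag, is a consequence of this elliptic-fibration construction rather than an ingredient in the valuative criterion.

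Your boundary analysis has a secondary gap. The claim that a normal non-RDP limit ``must be isotrivial with $j=\infty$ by the list of $2$-Gorenstein slc Fano singularities compatible with $K^2=1$'' is not a singularity-theoretic statement: isotriviality is a property of the induced elliptic fibration, and in \cite{AB22} it is extracted from the behaviour of $I_n^*$ ($n\ge 1$) Weierstrass fibers under stable reduction, matching Miranda's strictly semistable locus $\Aa^1\sqcup\infty$. Likewise, excluding three or more components and proving smoothness of $R^\circ$ are carried out in \cite{AB22} through the explicit classification of twisted and intermediate fibers together with an unobstructedness computation of exactly the kind reproduced in Theorem~\ref{gluing} of the present paper; your appeal to ``log smooth gluing'' is too coarse, since the two components meet along the support of a nonreduced twisted $I_0^*$ fiber carrying index-two quotient singularities on the double curve, and the relevant deformation functor is the $\QQ$-Gorenstein one, not the naive one.
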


\begin{Thm}\cite[Theorem~4.5]{AB22}
The surfaces parametrized by $R^\circ$ are either: 
\begin{enumerate}
    \item[(i)] normal degree one del Pezzo surfaces with RDPs, whose singular pseudofibers\footnote{Pseudofibers are the images of fibers of an elliptic surface with a section under the contraction of the section.} are Weierstrass of type $I_n, II, III$ or $IV$; or 
    \item[(ii)] semi-log-canonical (slc) unions of two degree one del Pezzo surfaces with, glued along twisted $I_0^*$ fibers of index 2, with all other singular fibers as in (i).
\end{enumerate}
\end{Thm}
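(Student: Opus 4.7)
The plan is to split $R^\circ$ according to the decomposition of $R$ given in the preceding theorem. Since $R^\circ$ requires every irreducible component to be normal, the isotrivial $j=\infty$ surfaces in $R\setminus U$ (which are typically non-normal in the relevant degenerations) are excluded, and $R^\circ$ decomposes as $(U\cap R^\circ)\sqcup(R^\circ\setminus U)$, where the second piece consists entirely of 2-Gorenstein slc surfaces with ample anticanonical divisor and exactly two normal irreducible components. Part (i) will correspond to the first piece, part (ii) to the second.

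For case (i), given $\bar X\in U\cap R^\circ$, I would first use Cayley--Bacharach to locate the unique base point of $|-K_{\bar X}|$ and blow it up, recovering a relatively minimal rational elliptic fibration $\tilde X\to\PP^1$ with the exceptional divisor as a section; the singular pseudofibers of $\bar X$ are precisely the images of the Kodaira singular fibers of $\tilde X$ after contracting this section. The remaining task is to exclude Kodaira types $I_n^*$, $II^*$, $III^*$, $IV^*$. The plan is to identify $U$ with Miranda's GIT-stable locus recalled in Section~\ref{prel}: stable pencils of plane cubics are exactly those whose associated elliptic surface carries only reduced ($I_n$, $II$, $III$, $IV$) Kodaira fibers, whereas the starred fibers correspond to strictly semi-stable or unstable pencils whose KSBA-stable replacement lies in the two-component boundary $R\setminus U$ rather than in $U$. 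Concretely, a one-parameter family whose generic member has a starred pseudofiber admits a stable-pair limit in which the multiplicity-2 core of the starred fiber is opened into two irreducible components meeting along the conductor.

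For case (ii), given $\bar X\in R^\circ\setminus U$, write $\bar X=\bar X_1\cup\bar X_2$ with double locus $D=\bar X_1\cap\bar X_2$, both components normal by hypothesis. Adjunction gives $K_{\bar X_i}+D=K_{\bar X}|_{\bar X_i}$, so ampleness of $-K_{\bar X}$ yields $-(K_{\bar X_i}+D)$ ample on each component. Restricting $|-K_{\bar X}|$ to each $\bar X_i$ produces an anticanonical pencil, and I would use a semistable reduction of $\bar X$ to a smooth del Pezzo in $U$, together with a Picard lattice and Euler characteristic count, to show each $\bar X_i$ has $K_{\bar X_i}^2=1$ with only RDP singularities---hence is itself a degree one del Pezzo with RDPs whose remaining singular pseudofibers are of the types listed in (i) by the previous case. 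The 2-Gorenstein condition on $\bar X$ then forces $D$ to be 2-Cartier but not Cartier on each $\bar X_i$, which within the anticanonical pencil characterizes $D$ (or rather twice its reduced structure) as a Kodaira $I_0^*$ fiber, whose central component has multiplicity 2; the twist of the gluing is the slc gluing involution on $D$ determined by the conductor exchange.

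The hard part will be pinning down the gluing data in case (ii) precisely as \emph{twisted $I_0^*$ of index $2$}. The slc condition a priori permits a family of gluing involutions on $D$, so one must invoke Koll\'ar's general slc gluing theory to single out the one compatible with the 2-Gorenstein index and the KSBA-stability of $\bar X$, and simultaneously rule out that any other Kodaira fiber type on the two sides is compatible with the stated index. The rigidity of the $I_0^*$ configuration together with the requirement that $\bar X$ smooth to a degree one del Pezzo is what ultimately forces the structure claimed.
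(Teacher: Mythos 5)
This statement is not proved in the paper at all: it is quoted verbatim from Ascher--Bejleri \cite[Theorem~4.5]{AB22} as background in Section~\ref{prel}, so there is no internal proof to compare your sketch against. Judged on its own, your outline has the right global shape (split $R^\circ$ along the boundary divisor of $R$, recover the elliptic fibration from $\lvert -K_{\bar X}\rvert$ via the Cayley--Bacharach base point, treat the two-component case by adjunction along the conductor), but both of its load-bearing steps are asserted rather than argued, and the first rests on a mechanism that is not the one available. The exclusion of $I_n^*$, $II^*$, $III^*$, $IV^*$ pseudofibers in the normal locus cannot be obtained by ``identifying $U$ with Miranda's GIT-stable locus'': GIT (semi)stability of cubic pencils and KSBA stability of the polarized surfaces are a priori different notions, and their comparison is itself a separate theorem in \cite{AB22}, proved \emph{after} the classification, not used to establish it. Note that normal degree one del Pezzo surfaces with $D_n$, $E_6$, $E_7$, $E_8$ singularities (i.e.\ with starred pseudofibers) exist perfectly well as RDP surfaces, so normality plus RDPs excludes nothing; what removes them is the stable-pair replacement coming from the log MMP for weighted stable elliptic surfaces in \cite{AB17} and \cite{AB21}, where a degeneration acquiring a starred fiber breaks off a second component at the wall. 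Your sketch names this wall-crossing phenomenon but supplies no argument for it.

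The second gap is in case (ii): the sentence ``the $2$-Gorenstein condition forces $D$ to be $2$-Cartier but not Cartier, which characterizes $D$ as a Kodaira $I_0^*$ fiber'' is precisely the content to be proved, and Koll\'ar's gluing theory alone does not deliver it. One must rule out gluings along smooth or nodal anticanonical curves (Gorenstein index $1$, which are slc and do occur as degenerations in nearby moduli problems --- compare the surface $S_0$ glued along a smooth elliptic curve discussed in the introduction of this very paper) as well as twisted fibers of the other additive types, whose indices are $3$, $4$, $6$. The missing ingredient is numerical: the anticanonical volume $(K_{\bar X})^2=1$ must distribute over the two components as $\sum_i(K_{\bar X_i}+D_i)^2$, and this, combined with the constraint that the Gorenstein index is $2$ and with the classification of twisted fibers of relative log canonical models in \cite{AB17}, is what singles out the index-$2$ twisted $I_0^*$ gluing. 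Without that computation the sketch does not close; with it, the appeal to ``rigidity of the $I_0^*$ configuration'' becomes unnecessary.
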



\section{Stable compactification of moduli of rational elliptic surfaces of index $m$ and Dolgachev surfaces}\label{stable}

Let $X$ be a rational elliptic surface of index $m\ge 2$ and let $F_o=mF_m$ be the unique multiple fiber of $p: X\to\PP^1$. Let $\bar A$ be an $m$-multi-section, arising as one of the nine exceptional divisors when viewing $X$ as the blow-up of $\PP^2$ at nine points.
By definition, $X$ together with $\bar A$ is a marked rational elliptic surface 
of index $m$.

Suppose all fibers are irreducible. Or, 
we may contract all components of fibers which do not meet $\bar A$, in which case we 
allow $X$ to have RDPs. Suppose further that $F_m$ has semi-log-canonical (slc) singularities.
Using $\bar A$, we obtain an explicit ample divisor $A: =\bar A+2F_m$. It is an ample divisor by the Nakai-Moishezon criterion and ${\bar A}^2=-1$. 
Then, this 
fits into the 
following framework due to 
Birkar 
\cite[Definitions~1.1,~1.8 and~1.13]{Bir}.

\begin{Def}
A \emph{stable minimal model} $(X , B), A$ consists of a connected projective pair $(X , B)$ and a $\QQ$-divisor $A\ge 0$ such that: 
\begin{itemize}
\item $(X, B)$ is slc, 
\item $K_X+B$ is semi-ample defining a contraction $f: X\to Z$,
\item $K_X+B+tA$ is ample for some $t >0$,
\item $(X, B+tA)$ is slc for some $t >0$.
\end{itemize}
When $K_X+B\sim_\QQ 0$, we call it a \emph{stable Calabi--Yau pair}.
\end{Def}

\begin{Def} 
    Fix $d\in \mathbb N$, $c,v \in \QQ_{>0}$, and $\sigma \in \QQ[t]$. 
    A \emph{$(d,c,v, \sigma)$-stable minimal model} is a stable minimal model $(X,B), A$ such that:
    \begin{itemize}
        \item $\dim X=d$,
        \item the coefficients of $A$ and $B$ are in $c\mathbb Z_{\geq0}$,
        \item $\mathrm{vol}(A|_F)= v$, where $F$ is any general fiber of the fibration $f:X\to Z$ determined by $K_X+B$ over any irreducible component of $Z$,
        \item $\mathrm{vol}(K_X+B+tA)=\sigma (t)$ for $0\leq t \ll 1$.
    \end{itemize}
    When $K_X+B\sim_\QQ 0$, we call it a \emph{$(d,c,v)$-stable Calabi--Yau pair}.
\end{Def}
\begin{Def}
    Let $U$ be a reduced scheme over $\CC$. 
    A \emph{family of $(d,c,v,\sigma)$-stable minimal models over $U$} consists of a projective morphism $X\to U$ of schemes and $\QQ$-divisors $B$ and $A$ on $X$ such that:
    \begin{itemize}
        \item $(X,B+tA)\to U$ is a locally stable family for every sufficiently small $t\in\QQ_{\geq0}$,
        \item $B=cD$ and $A=cN$ for relative Mumford divisors $D,N\geq 0$,
        \item $(X_u,B_u), A_u$ is a $(d,c,v,\sigma)$-stable minimal model over $k(u)$ for each $u\in U$.
    \end{itemize}
    When $K_{X/U}+B\sim_\QQ 0/U$, we call it a \emph{family of $(d,c,v)$-stable Calabi--Yau pairs}.
\end{Def}
Here, $B_u,A_u$ are the divisorial pullbacks of $B,A$ to $X_u$, 
respectively \cite[Definition~4.6]{Kol23}. For a definition of a locally stable family, see \cite[Definition--Theorem~4.7]{Kol23}. 

\begin{Thm} \cite[Theorem~1.14]{Bir} \label{t.bir2}
     Fix $d\in \mathbb N$, $c,v\in \QQ_{>0}$, and $\sigma \in \QQ[t]$. Then, there exists a proper Deligne--Mumford stack $\mathcal M_{d,c,v,\sigma}$ over $\CC$ 
     such that $\mathcal M_{d,c,v,\sigma}(U)$ is isomorphic to the groupoid of families of $(d,c,v,\sigma)$-stable minimal models over $U$ as groupoids for every reduced scheme $U$ over $\CC$. 
     Moreover, it admits a projective coarse moduli space $M_{d,c,v,\sigma}$. 
\end{Thm}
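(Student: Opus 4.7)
The plan is to follow the standard paradigm for constructing KSBA-type moduli spaces, adapted to the broader setting of stable minimal models in which $K_X+B$ is only semi-ample. The four pillars are boundedness, local closedness of the moduli functor, properness, and projectivity of the coarse space; the point of Birkar's framework is that the whole package goes through in this generality.

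First, I would establish boundedness of the class of $(d,c,v,\sigma)$-stable minimal models with fixed numerical invariants. Write $f: X\to Z$ for the contraction defined by $K_X+B$. The polynomial $\sigma(t)=\mathrm{vol}(K_X+B+tA)$ encodes both $\mathrm{vol}(K_X+B)$, hence the volume on the base $Z$, and $\mathrm{vol}(A|_F)=v$ on a general fiber $F$. Combined with the fact that the coefficients of $B$ and $A$ lie in $c\ZZ_{\ge 0}$ (a DCC set) and the slc hypothesis on $(X,B+tA)$ for some $t>0$, one invokes Birkar's boundedness result for stable minimal models, which in turn rests on boundedness of complements and of Fano-type fibrations. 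This is by far the hardest step.

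Second, with boundedness in hand, there is a finite-type locally closed subscheme $H$ of a suitable Hilbert-type scheme parametrizing $N$-canonically embedded stable minimal models together with a universal family. The functor of families of $(d,c,v,\sigma)$-stable minimal models cuts out a locally closed subscheme $U\subset H$: one uses openness of the slc condition in locally stable families, the fact that local stability of $(X,B+tA)$ for all sufficiently small $t$ is a locally closed condition, and constancy of $\sigma$ on connected components. Since automorphism groups of stable minimal models are finite, the quotient $\mathcal M_{d,c,v,\sigma}=[U/\mathrm{PGL}_N]$ is a Deligne--Mumford stack. Properness is checked via the valuative criterion: given a family over a punctured trait, after a finite base change one runs a relative $(K+B+tA)$-MMP and appeals to uniqueness of the ample model for small $t>0$ to extend the family uniquely across the closed point.

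Finally, for projectivity of the coarse space $M_{d,c,v,\sigma}$ one produces an ample line bundle. The natural choice is the determinant $\det f_*\cO_X\bigl(N(K_{X/U}+B+t_0 A)\bigr)$ for some fixed small rational $t_0>0$ with $Nt_0\in\ZZ$ and $N\gg 0$; Koll\'ar's ampleness lemma, in the form adapted to locally stable families, shows that this line bundle descends and is ample on $M_{d,c,v,\sigma}$. The main obstacle is boundedness in Step 1, which is precisely the deep input provided by Birkar's recent work; the remaining steps follow fairly standard arguments from moduli theory once boundedness is available, with the one delicate point being that one must consistently track both the $(K+B)$-semi-ample model and the $(K+B+tA)$-ample model throughout.
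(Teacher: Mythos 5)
This statement is not proved in the paper at all: it is quoted verbatim from Birkar's work (\cite[Theorem~1.14]{Bir}) and used as a black box, so there is no ``paper's own proof'' to compare your argument against. What you have written is a high-level roadmap of how such a theorem is established --- boundedness, local closedness of the functor on a Hilbert-scheme locus, properness via the valuative criterion and the relative MMP, and projectivity via an ampleness lemma --- and this roadmap does correctly reflect the architecture of Birkar's actual proof.

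However, as a proof it is essentially circular and incomplete. In your first step you ``invoke Birkar's boundedness result for stable minimal models,'' which is itself the central new content of the theorem you are trying to prove; in this generality (semi-ample rather than ample $K_X+B$, with the auxiliary polarization $A$) boundedness is not a known input but the main difficulty, resting on boundedness of complements and of Fano-type fibrations and occupying a large portion of \cite{Bir}. Several genuinely delicate points are also elided: the functor is only defined over \emph{reduced} bases precisely because divisorial pullbacks and local stability of $(X,B+tA)$ behave badly over non-reduced schemes, and one must prove that ``locally stable for all sufficiently small $t$'' with a uniform bound on $t$ is a constructible/locally closed condition; in the properness argument one must show that the $(K+B+tA)$-ample models of the special fiber stabilize as $t\to 0^+$ and that the resulting limit is independent of choices, which requires more than ``uniqueness of the ample model for small $t$''; and for projectivity the semi-ampleness of $K_X+B$ on the fibers means the determinant line bundle you propose must be handled with care (nefness plus a separate bigness argument on the closure of each stratum). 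None of these can be dismissed as routine. So the proposal is a fair summary of the strategy but does not constitute a proof; the appropriate treatment, and the one the paper adopts, is to cite Birkar's theorem as an external result.
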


We write $\mathcal P_{d,c,v}:=\mathcal M_{d,c,v,\sigma}$ and $P_{d,c,v}:=M_{d,c,v,\sigma}$ in the case of Calabi--Yau pairs.

\medskip

Consider the locally closed subvariety $U_m\subset (\PP^2)^9$
consisting of $(p_i)_{i=1,\cdots,9}$ such that 
\begin{itemize}
    \item $X=\Bl_{p_1,\cdots,p_9}\PP^2$ is a rational elliptic surface of index $m$ with irreducible fibers\\ (this is called an \emph{unnodal} Halphen surface of index $m$ in \cite[Section~2]{CD12}), 
    \item the support $F_m$ of its unique multiple fiber $F_o=mF_m$ is slc, 
    \item the ninth exceptional divisor $\bar A$ meets fibers of $X\to \PP^1$ in their smooth loci.
\end{itemize}
One can readily check that $(X,F_m),A$ for $(X,F_m)$ parametrized by $U_m$ and $A=\bar A+2F_m$ are $(2,1,3)$-stable Calabi--Yau pairs. 
From this, we obtain projective moduli spaces for marked rational elliptic surfaces of index $m$.
\begin{Thm}
    Let $m\geq2$. 
    Then there exists a proper Deligne--Mumford stack 
    parametrizing marked rational elliptic surfaces of index $m$ as $(2,1,3)$-stable Calabi--Yau pairs, with 
    a projective coarse moduli space. 
\end{Thm}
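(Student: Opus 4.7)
The plan is to apply Birkar's Theorem~\ref{t.bir2} once the triples $(X,F_m),A$ with $A:=\bar A+2F_m$ arising from $U_m$ are shown to be $(2,1,3)$-stable Calabi--Yau pairs. First I would verify the Calabi--Yau relation: since $p:X\to\PP^1$ is given by $|-mK_X|$ and the unique multiple fiber is $F_o=mF_m$, we have $-mK_X\sim mF_m$, so $K_X+F_m\sim_\QQ 0$. The slc condition on $(X,F_m)$ is built into the definition of $U_m$ together with the fact that $X$ is either smooth or has at worst rational double points after the contraction described in the setup.

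Next I would compute the numerical data. Using $\bar A^2=-1$, the $m$-multisection property $\bar A\cdot F_o=m$ (so that $\bar A\cdot F_m=1$), and $F_m^2=F_o^2/m^2=0$, one finds $A^2=-1+4+0=3$. This is also the required volume $v$: since $K_X+F_m\sim_\QQ 0$, the contraction $f:X\to Z$ determined by $K_X+B$ collapses to a point, so the general fiber in Birkar's sense is all of $X$ and $\mathrm{vol}(A|_X)=A^2=3$. Ampleness of $A$ then follows from the Nakai--Moishezon criterion, since $A\cdot\bar A=1$, $A\cdot F_m=1$, $A\cdot F=m$ on every other fiber, and $A\cdot C>0$ for any remaining irreducible $C$ because $\bar A\cdot C\geq 0$ and $F_m\cdot C\geq 0$ with at least one strict. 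The coefficients of $A$ and $B=F_m$ are nonnegative integers, so $c=1$, and $d=\dim X=2$.

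With these conditions in hand, Birkar's theorem provides the proper Deligne--Mumford stack $\mathcal P_{2,1,3}$ with projective coarse moduli space $P_{2,1,3}$. Realizing the universal construction over $U_m$ as a family of $(2,1,3)$-stable Calabi--Yau pairs and passing to the natural $\mathrm{PGL}_3$-quotient on the nine ordered base points yields a morphism from the moduli stack of marked rational elliptic surfaces of index $m$ into $\mathcal P_{2,1,3}$. Taking the closure of its image produces a closed substack of $\mathcal P_{2,1,3}$, which is a proper Deligne--Mumford stack whose coarse moduli space is a closed subvariety of $P_{2,1,3}$, hence projective.

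The main obstacle I anticipate is the auxiliary slc condition $(X,F_m+tA)$ for some small $t>0$ that Birkar's definition requires: because $A$ contains $F_m$ with coefficient $2$, the combined coefficient of $F_m$ in $F_m+tA$ is $1+2t>1$, outside the usual slc range. Making this rigorous---either by replacing $A$ within its linear class by an effective representative whose support avoids $F_m$, or by appealing to a component-wise interpretation of the slc hypothesis in the presence of a shared component---together with checking that the construction genuinely defines a family of stable Calabi--Yau pairs in the functorial sense demanded by Theorem~\ref{t.bir2}, is the key subtlety to address.
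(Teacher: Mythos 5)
Your proposal follows the paper's route exactly: build the family $(\cX,\mathcal F_m),\mathcal A$ over $U_m$, check it is a family of $(2,1,3)$-stable Calabi--Yau pairs, map to $\mathcal P_{2,1,3}$ by the universal property, and take the closure of the image; your numerical verifications ($K_X+F_m\sim_\QQ 0$, $A^2=3$, Nakai--Moishezon) are correct and are precisely what the paper dismisses as ``one can readily check.'' The subtlety you flag in your last paragraph is real and is \emph{not} addressed in the paper either: since $B=F_m$ and $A=\bar A+2F_m$ share the component $F_m$, the divisor $B+tA$ has coefficient $1+2t>1$ there, so the condition ``$(X,B+tA)$ is slc for some $t>0$'' in Birkar's definition fails as literally stated. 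Be aware that your first proposed repair is delicate: for $m\geq 3$ one has $h^0(X,2F_m)=1$, so $2F_m$ admits no other effective integral representative, and the $\QQ$-linearly equivalent divisor $\tfrac{2}{m}F_{\mathrm{gen}}$ has coefficient outside $c\ZZ_{\geq 0}$ for $c=1$; any genuine fix must therefore either modify $A$ (changing $c$ or $v$) or reduce the coefficient of $F_m$ in $B$, and this point deserves to be resolved rather than deferred.
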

\begin{proof}
    Let $\cX\to U_m$ be the family of rational elliptic surfaces of index $m$ obtained via blowing up the family of nine points in the trivial $\PP^2$-bundle over $U_m$. Let $\bar{\mathcal A}$ be the $m$-multi-section  taken to be the 9-th exceptional divisor, and let $\mathcal A:=\bar{\mathcal A}+2\mathcal F_m$. 
    Let $\mathcal F_m$ denote the family over $U_m$ of the reduced supports of the multiple fibers in $\cX$.

    Then, it is straightforward to see that $(\cX,\mathcal F_m)\to U_m$ together with $\mathcal A$ is a family of $(2,1,3)$-stable Calabi--Yau pairs. 
    By the universal property of $\mathcal P_{2,1,3}$, this family induces a morphism $U_m\to \mathcal P_{2,1,3}$. The closure of the image of $U_m$ provides the desired proper Deligne--Mumford stack, since two marked rational elliptic surfaces of index $m$ are isomorphic if and only if the corresponding $(X,F_m),A$ are isomorphic.
\end{proof}

\begin{Prop}\label{multisection}
Let \(S:=S(p, q)\) be a Dolgachev surface, i.e., there is  a minimal elliptic fibration \(f: S\to B\cong\mathbb{P}^1\) with two multiple fibres \(F_P\) and \(F_Q\) of coprime multiplicities \(p\) and \(q\). 
Then there is a \(pq\)-multi-section. 
\end{Prop}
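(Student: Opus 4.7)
The plan is to produce a line bundle $L$ on $S$ of relative degree $pq$ over $\PP^1$, apply Riemann-Roch to obtain an effective divisor of relative degree $pq$, and extract the desired multi-section as an irreducible component. The numerical starting point is that, since all fibers of $f$ are linearly equivalent as pullbacks of points on $\PP^1$, we have $F \sim pF_P' \sim qF_Q'$ where $F$ is a general fiber and $F_P', F_Q'$ are the reduced supports of $F_P, F_Q$. Hence any irreducible curve $C$ with $C\cdot F>0$ satisfies $C\cdot F=p(C\cdot F_P')=q(C\cdot F_Q')$, and coprimality of $p,q$ forces $pq\mid C\cdot F$; in particular any multi-section has degree a positive multiple of $pq$.

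The heart of the argument is producing a line bundle $L$ with $L\cdot F=pq$. I plan to use the cohomological description of elliptic fibrations: the torsor $\Pic^{d}(S/\PP^1)$ has a class in $H^1(\PP^1,\mathcal{J}_\eta)$, where $\mathcal{J}$ is the relative Jacobian of $f$. Since $\operatorname{Br}(\PP^1)=0$, this class is determined by the two local invariants at the multiple fibers, which live in $\ZZ/p$ and $\ZZ/q$ and have exact orders $p$ and $q$. Taking $d=pq$ kills both invariants, so $\Pic^{pq}(S/\PP^1)$ admits a section, yielding the desired $L$. Alternatively one can appeal to the period-equals-index theorem for curves of genus one over $\CC(\PP^1)$, which bypasses the explicit cohomological computation.

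With $L$ at hand, Riemann-Roch closes the argument. The canonical bundle formula $K_S\equiv -F+(p-1)F_P'+(q-1)F_Q'$ gives $K_S\cdot F=0=F^2$, so $\chi(L+nF)=\chi(L)+npq$ grows without bound. Moreover $h^2(L+nF)=h^0(K_S-L-nF)=0$ for $n\ge 0$, since $K_S-L-nF$ restricts to a line bundle of degree $-pq$ on every smooth fiber, forcing any global section to vanish on each such fiber and hence identically. Thus $|L+nF|\neq\emptyset$ for $n$ large; any effective member $D$ has $D\cdot F=pq$, and decomposing $D$ into irreducible components and invoking the numerical constraint of the first paragraph leaves exactly one multi-section component, of degree exactly $pq$. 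The main obstacle is the middle step producing $L$ --- identifying the local invariants at the multiple fibers and verifying that coprimality of $p,q$ forces $pq$ to be the precise order of the torsor class.
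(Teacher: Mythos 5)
Your proposal follows essentially the same route as the paper, which simply invokes Ogg--Shafarevich theory: since the Brauer group of the Jacobian surface vanishes, the order of the torsor class of $f$ (which is $pq$, forced by the local invariants of orders $p$ and $q$ at the two multiple fibers) equals the minimal degree of a multi-section. What you add beyond the paper's two-line citation is worthwhile: the elementary divisibility bound $pq \mid C\cdot F$ for any multi-section, and the explicit Riemann--Roch extraction of an effective divisor from a relative-degree-$pq$ line bundle; the paper delegates both to \cite[Proposition~4.6.5 and Corollary~4.6.6]{CDL}. One correction, though, at the step you yourself identify as the crux: the assertion that the torsor class is determined by its local invariants does not follow from $\mathrm{Br}(\PP^1)=0$. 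The kernel of the localization map on $H^1(K, J(S)_\eta)$ is the Tate--Shafarevich group, which is identified with $\mathrm{Br}(J(S))$, the Brauer group of the Jacobian \emph{surface}; it vanishes here precisely because $J(S)$ is a rational surface. (For an elliptic surface with $p_g>0$ one still has $\mathrm{Br}(\PP^1)=0$, yet the class is not determined by local data.) Tsen's theorem $\mathrm{Br}(\CC(\PP^1))=0$ enters at a different point, namely in promoting a $K$-rational point of $\Pic^{pq}$ to an actual $K$-rational divisor. With that attribution fixed, your argument is complete.
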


\begin{proof}
The existence of a $pq$-multi-section is obtained by the Ogg--Shafarevich theory (cf. \cite[Chapter 4]{CDL}). Since the Brauer group of the Jacobian surface $J(S)$ of $S$ is zero, the order of the torsor $x\in H^1_{\text{\'et}}(\eta, J(S)_\eta)$ for a general point $\eta\in B$ corresponding to the elliptic fibration $f: S\to B$ coincides with the smallest degree of a multi-section of $f$ \cite[Proposition~4.6.5 and Corollary 4.6.6]{CDL}. \end{proof}


\begin{Rmk}
Proposition~\ref{multisection} shows the existence of a \(pq\)-multi-section $C$. We expect the existence of a \(pq\)-multi-section $C$ which is isomorphic to $\PP^1$. If we know $C$ is isomorphic to $\PP^1$ then the canonical bundle formula (cf. \cite[Chapter 2]{Dol}) shows that $C^2=p+q+pq-2$. It is true for known examples.
\end{Rmk}

When such a multi-section is not too singular and admits a flat deformation in a family of Dolagchev surfaces, one obtains a family of marked Dolgachev surfaces as stable minimal models, hence a morphism from the base to the projective moduli space $M_{d,c,v,\sigma}$ by Theorem~\ref{t.bir2}. Here $A$ can be taken to be the multi-section and set $B=c(F_x+F_y)$, where $0 < c \le 1$ and $F_x=(F_P)_{\rm red}, F_y=(F_Q)_{\rm red}$. By taking the closure of its image, 
one obtains 
a compact moduli space of marked Dolgachev surfaces, while understanding of their limits requires further investigation.


\section{Moderate degeneration of elliptic surfaces with non-negative Kodaira dimension}\label{moderate}

Let $f^o: \cX^o\to\De^o=\De-\{ 0\}$ be a flat family of smooth minimal projective surfaces with non-negative Kodaira dimension. Then by the theory of semi-stable minimal models of threefolds \cite[Section~7]{KM}, there exists a $\QQ$-factorial terminal threefold $\cX$ over a disk $\De$ which compactifies $f^o$, with $K_{\cX}$ $\QQ$-nef. The central fiber $f^{-1}(0)$ is then a projective surface with semi-log-terminal (slt) singularities.


If $X_0: = f^{-1}(0)$ is normal and admits a terminal smoothing, then RDPs or quotient singularities of type $\frac{1}{r^2d}(1, ard-1)$ with $(a, r)=1$ can occur in $X_0$. By a partial resolution, a singularity of type $\frac{1}{r^2d}(1, ard-1)$ splits into $d$-number of singularities of type $\frac{1}{r^2}(1, ar-1)$. Additionally by the base change and analytic $\QQ$-factorialization, Kawamata \cite[Theorem~1.3]{Kaw} showed that there is an {\it analytically $\QQ$-factorial}  terminal threefold $\cX$ over a disk $\De$ which compactify $f^o$, and $f: \cX\to\De$ is a minimal permissible degeneration of projective surfaces. We refer to \cite{Kaw} for the definition of permissible degeneration.





A {\it Moderate degeneration} 
is a permissible degeneration $f:\cX \to \De$ such that the central fiber $f^{-1}(0)$ is normal. If $f$ is a moderate degeneration then we have the following.
\begin{enumerate}
\item The surface $X_0:=f^{-1}(0)$ has at worst RDPs or cyclic quotient singularities of type $1/{r^2}(a, r-a)$ with $(a, r)=1$.
\item For $t\in \De^o$, $K^2_{X_0}=K^2_{X_t}$ and $\chi(X_0, \cO_{X_0})= \chi(X_t, \cO_{X_t})$.
\item 
$e(X_0)=e(X_t)$, and the Noether formula holds.
\item If 
$K_\cX$ is $f$-nef, then there exist positive integers $m_1$ and $m_2$ such that $$h^0(X_t, mK_{X_t}) > h^0(S, mK_S)$$ for all $m$ with $m_1| m$ and $m> m_2$ where $S$ is the minimal resolution of $X_0$.
\end{enumerate}

The possible singularities and the structure of the central fiber in moderate degeneration are completely classified by Kawamata when $\kappa(X_t)$ is 0 or 1.
\begin{Thm} \cite[Theorem~4.1]{Kaw}. Let $f: \cX\to\De$  be a moderate degeneration of surfaces with 
$\kappa(X_t)=0$ and $f$ 
not smooth. Then $X_t$ is an Enriques surface, and $X_0$ is a singular rational surface with only quotient singularities of type $1/4(1, 1)$.
\end{Thm}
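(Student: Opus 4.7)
The plan is to combine the Enriques--Kodaira classification of smooth minimal surfaces of Kodaira dimension $0$ with properties (1)--(4) of moderate degenerations to first pin down $X_t$ and then classify the singularities of $X_0$.

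First, since $X_t$ is smooth, minimal, and $\kappa(X_t)=0$, the Enriques--Kodaira classification tells me $X_t$ is K3, Enriques, abelian, or bielliptic; in particular $h^0(X_t, mK_{X_t})\leq 1$ for all $m$. Property~(4) then forces $h^0(S, mK_S)=0$ for all large $m$ divisible by $m_1$, hence $\kappa(S)=-\infty$, so $S$ is ruled over a smooth curve $C$ of genus $g=q(S)$. Since $X_0$ has only rational singularities, $\chi(\cO_{X_0})=\chi(\cO_S)=1-g$, and property~(2) gives $\chi(\cO_{X_t})=1-g$. This immediately rules out K3 (which would need $g=-1$). For $X_t$ abelian, $g=1$, but $h^1(X_0,\cO)=q(S)=1<2=q(X_t)$ violates upper semi-continuity of $h^1$. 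So only Enriques ($g=0$, $S$ rational) and bielliptic ($g=1$, $S$ ruled over an elliptic curve) survive, and the hard part is to eliminate the bielliptic case.

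The next step is to run an Albanese argument in the relative setting. Since $h^1$ is constant equal to $1$ along the family, $R^1 f_* \cO_\cX$ is a line bundle on $\De$, so the relative Albanese $\alpha\colon \cX\to\mathcal{A}$ over $\De$ exists with $\mathcal{A}\to\De$ a smooth family of elliptic curves. For $t\neq 0$, $\alpha|_{X_t}$ is the bielliptic Albanese, whose generic fibers are smooth elliptic curves. For $t=0$, the Albanese of $X_0$ coincides with that of $S$ (by rationality of singularities), namely the ruling $S\to C$ descended to $X_0$; its generic fiber is the image of a general $\PP^1$-fiber of $S\to C$, which misses all exceptional divisors and hence is a smooth rational curve. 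Since $\cX$ has terminal (hence Cohen--Macaulay) singularities and $\mathcal{A}$ is regular, the equidimensional map $\alpha$ is flat by miracle flatness, so the arithmetic genus of its generic fibers is constant on the connected base $\mathcal{A}$. The contradiction $p_a=1$ for $t\neq 0$ versus $p_a=0$ for $t=0$ rules out the bielliptic case. This is the step I expect to be the main obstacle, both because it requires the relative Albanese construction on a singular threefold and because flatness of $\alpha$ must be justified with care.

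Finally, once $X_t$ is Enriques, $2K_{X_t}\sim 0$ gives $2K_\cX\sim f^* L$ for some $L\in\Pic(\De)$, hence $2K_{X_0}\sim 0$, so the $\QQ$-Gorenstein index of every singularity of $X_0$ divides $2$. Among the allowed quotient singularities $\tfrac{1}{r^2}(a,r-a)$, the index equals $r$, leaving only $\tfrac{1}{4}(1,1)$. To exclude RDPs (which have index $1$), I would use property~(3): writing $K_S=\pi^* K_{X_0}-\tfrac12\sum E_i$ with $E_i^2=-4$ at each $\tfrac14(1,1)$-point, a short calculation gives $K_S^2=-m$ where $m$ is the number of such points, so Noether's formula on $S$ yields $e(S)=12+m$. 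Comparing with $e(X_0)=e(X_t)=12$ and $e(X_0)=e(S)-(m+M)$, where $M$ counts the exceptional curves over RDPs, then forces $M=0$. The hypothesis that $f$ is not smooth guarantees $m\geq 1$, completing the classification.
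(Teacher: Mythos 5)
First, note that the paper does not prove this statement at all: it is quoted from Kawamata \cite[Theorem~4.1]{Kaw}, so there is no in-paper argument to measure yours against, and what follows assesses your reconstruction on its own terms. Your Step 1 is correct and complete: property (4) (applicable because a moderate degeneration is taken relatively minimal, so $K_\cX$ is $f$-nef) forces $\kappa(S)=-\infty$, and the bookkeeping with $\chi(\cO)$, rationality of the singularities of $X_0$, and semicontinuity of $h^1(\cO)$ correctly eliminates the K3 and abelian cases. Step 3 is essentially right but, as written, circular at one point: to restrict $2K_\cX$ to the fibers as a line bundle you need $2K_\cX$ to be Cartier, which is more or less the index bound you are trying to establish. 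The standard fix is to observe that $\omega^{[2]}_{\cX/\De}$ is a reflexive rank-one sheaf, hence $\cO_\cX(D)$ for a Weil divisor $D$; triviality over $\De^o$ plus irreducibility of the normal central fiber gives $D\sim aX_0\sim 0$, so $\omega^{[2]}_{\cX}$ is in fact invertible, and only then does the local index of each $\tfrac{1}{r^2}(a,r-a)$ point divide $2$, forcing $r=2$. Your Euler-number count eliminating RDPs is correct; note it could not by itself pin down $r$, since every $\tfrac{1}{r^2}(a,r-a)$ point contributes equally to $-K_S^2$ and to the number of exceptional curves, so the index argument is genuinely needed.

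The real gap is Step 2. The endgame --- miracle flatness for $\alpha\colon\cX\to\mathcal A$ (terminal threefold singularities are Cohen--Macaulay and both Albanese fibrations are equidimensional of fiber dimension one), hence local constancy of $\chi(\cO)$ of the fibers, contradicting $p_a=1$ against $p_a=0$ --- is a good idea and would close the bielliptic case. But the object it is applied to is only asserted to exist. You need representability, smoothness and properness of $\mathrm{Pic}^0_{\cX/\De}$ (smoothness from $h^2(\cO_{X_0})=0$, properness of $\mathrm{Pic}^\tau$ for a projective family with integral normal fibers), passage to the dual abelian scheme, a section through the fiberwise smooth locus to define $\alpha$, and an extension of $\alpha$ over $\mathrm{Sing}(\cX)$ (available because these are quotient singularities). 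None of this is false, but it is a substantial construction currently replaced by one sentence, and you correctly flag it as the main obstacle. Until it is carried out --- or replaced by another mechanism, e.g.\ a relative canonical cover reducing the bielliptic case to the already-excluded abelian one --- the bielliptic case is not actually eliminated and the proof is incomplete.
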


\begin{Thm}\label{classification} \cite[Theorem~4.2]{Kaw}. Let $f: \cX\to\De$  be a moderate degeneration of surfaces with 
$\kappa(X_t)=1$ and $f$ 
not smooth. Then there exists a smooth surface $B$, a proper surjective morphism $g: X\to B$ with general fibers elliptic curves, and a proper smooth morphism $h: B\to\De$ such that $f=h\circ g$. In particular, the central fiber also admits an elliptic fibration.
\end{Thm}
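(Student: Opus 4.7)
The plan is to construct $g$ as the relative Iitaka fibration of $K_{\cX/\De}$. Since $\cX$ is analytically $\QQ$-factorial terminal with $K_\cX$ that we may take to be $f$-nef (from the setup of minimal permissible degenerations recalled just before the theorem), the relative base-point-free theorem applied to $K_{\cX/\De}$ ensures that a sufficiently divisible multiple $mK_{\cX/\De}$ is $f$-generated. After taking the Stein factorization of the induced morphism, one obtains a proper contraction $g: \cX \to B$ with connected fibers, together with a proper morphism $h: B \to \De$ satisfying $f = h \circ g$. Deformation invariance of plurigenera in this terminal three-fold setting, combined with $\kappa(X_t) = 1$ for general $t$, forces the Iitaka dimension of $K_{\cX/\De}|_{X_t}$ to be $1$ fiberwise, so $\dim B = 2$ and $h$ has relative dimension one.

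The next step is to identify the general fibers of $g$ as smooth elliptic curves. For generic $t \in \De$, the restriction $g|_{X_t}: X_t \to h^{-1}(t)$ agrees, after Stein factorization, with the classical pluricanonical Iitaka fibration of the smooth properly elliptic surface $X_t$, whose general fiber is an elliptic curve by the Enriques--Kodaira classification of surfaces with $\kappa = 1$. Flatness of $g$ on the generically elliptic locus, together with irreducibility of $\cX$, propagates this structure globally, so the general fibers of $g: \cX \to B$ are elliptic curves.

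The main obstacle, and the essence of Kawamata's argument, is to prove that $B$ is smooth and that $h: B \to \De$ is smooth; once these are in hand, the restriction $g|_{X_0}: X_0 \to B_0 := h^{-1}(0)$ supplies the asserted elliptic fibration of the central fiber. The ingredients are: (i) $X_0$ is normal with at worst rational double points or cyclic quotient singularities of type $\frac{1}{r^2}(a, r-a)$ with $(a,r)=1$, by the properties of a moderate degeneration listed above; (ii) $g|_{X_0}$ realizes the Iitaka fibration of the normal properly elliptic surface $X_0$, whose base $B_0$ must therefore be a smooth curve; and (iii) $B$ is normal as the relative $\operatorname{Proj}$ of a finitely generated graded sheaf of algebras over $\De$. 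Given these, $h$ is a proper flat morphism whose central fiber is a smooth curve and whose generic fibers are smooth curves by generic smoothness, whence $h$ is smooth.

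The delicate technical point — and the reason the hypothesis of \emph{normality} of $X_0$ is indispensable — is to exclude any degenerate behavior of $B$ arising from the interaction between the terminal singularities of $\cX$ and the fibration $g$. Concretely, one must verify that the images under $g|_{X_0}$ of the quotient singularities of $X_0$ are smooth points of the curve $B_0$, so that no extra singularities are introduced into $B$ along the central fiber. This uses the explicit classification of three-dimensional terminal singularities admitting a morphism to a disk (equivalently, the classification of allowed $\frac{1}{r^2}(a,r-a)$-type quotients on $X_0$) to rule out configurations in which $\cX$ could fail to map smoothly onto $B$ transversely to the central fiber, and it is precisely here that the full strength of Kawamata's setup for moderate degenerations is used.
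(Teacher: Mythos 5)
This theorem is quoted verbatim from Kawamata \cite{Kaw}; the paper itself supplies no proof, so there is no internal argument to compare yours against, and I can only assess the sketch on its own terms. Your overall strategy --- build $g$ as the relative Iitaka fibration of $K_{\cX/\De}$, identify the general fibers via the classification of surfaces with $\kappa=1$, then show $B$ and $h$ are smooth --- is the natural one and is consistent with the setup recalled in Section~\ref{moderate}. But the first step as written does not work: the relative base-point-free theorem makes $|mD|$ relatively free only when $aD-K_\cX$ is $f$-nef and $f$-big for some $a$, and with $D=K_{\cX/\De}$ this would require $K_\cX$ to be $f$-big, which is exactly what fails here since $\kappa(X_t)=1$, not $2$. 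What you actually need is relative semiampleness of the nef divisor $K_\cX$ over $\De$, i.e.\ abundance for the minimal terminal threefold $\cX/\De$ (or, equivalently, semiampleness of $K_{X_0}$ on the normal log terminal central fiber together with an argument extending pluricanonical sections from $X_0$ to $\cX$). That is a genuinely deeper input and cannot be replaced by the base-point-free theorem; likewise, "deformation invariance of plurigenera" is not free in this degenerating setting --- note that property (4) listed before the theorem asserts a strict \emph{inequality} of plurigenera against the minimal resolution of $X_0$.

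The second gap concerns the heart of the statement, namely that $h:B\to\De$ is smooth. This is equivalent to the scheme-theoretic fiber $h^{-1}(0)$ being a reduced smooth curve (once that holds, $B$ is automatically regular along it, because $h^{-1}(0)$ is a Cartier divisor cut out by a nonzerodivisor). Your final paragraph announces that one "must verify" that the images of the quotient singularities of $X_0$ are smooth points of $B_0$ and that this "uses the classification," but no argument is given; in particular you never rule out that $h$ acquires a multiple (non-reduced) fiber over $0\in\De$, nor that $B_0$ could be reducible. These are precisely the points where the normality of $X_0$ and the restricted list of allowed singularities of moderate degenerations must be brought to bear, and they constitute the real content of Kawamata's theorem rather than a routine verification. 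As it stands, the proposal is a plausible outline in which the two hardest steps are either attributed to the wrong theorem or left unproved.
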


The singular fibers below are classified in \cite[Theorem~4.2]{Kaw}.
Let  $\sigma: S\to X_0$ be the minimal resolution, and let $g_{X_0}: X_0\to B_0$ and $g_S: S\to B_0$ be the elliptic fibrations 
induced by $g$. Let $\tau: S\to \bar S$ be the contraction to the relative minimal model $g_{\bar S}: \bar S\to B_0$. 

Let $L$ be a scheme theoretic fiber of $g_{X_0}$ which passes through some singular points of $\cX$. Let $L_S=\sigma^*L$ and  $L_{\bar S} =\tau_*L_S$. Let $\tilde m$ be the greatest common divisor of the coefficients of the Weil divisor $L$ on $X_0$, and write $L = \tilde m L_{\rm red}$. Then one of the following holds.

\begin{enumerate}
\item type $m{I}_d(r, a)$, $\tilde m=mr$; \,\, $L_{\rm red}$ is a reduced cycle of $d$ nonsingular rational curves (resp. a reduced rational curve with one node) if $d > 1$ (resp. $d = 1$), and ${\rm Sing}(X_0)\cap {\rm Supp}(L)$ consists of quotient singularities of type $1/r^2(a, r-a)$ at $d$ double points of $L_{\rm red}$.

\begin{figure}[hbtb]
\begin{center}
\includegraphics[scale=0.25]{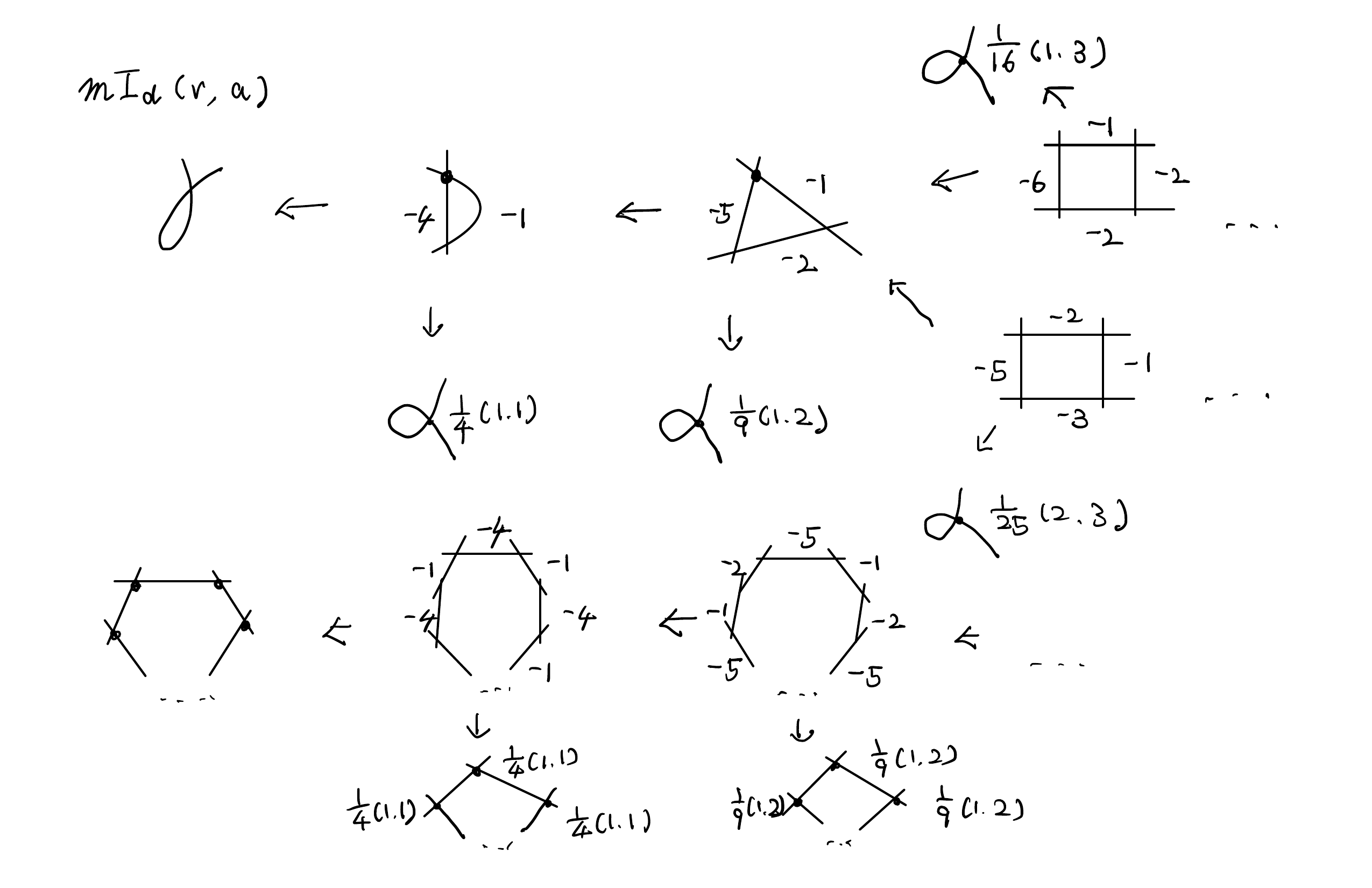}

\end{center}
\label{fig-lct}
\end{figure}

\item type $II(r)$, $r=2$ or $3$, $\tilde m =r$;\,\,  $L_{\rm red}$ is a rational curve with one cusp, and ${\rm Sing}(X_0)\cap {\rm Supp}(L)$ consists of a quotient singularity of type $1/r^2(1, r-1)$ at the cusp of $L_{\rm red}$.

\item type $II(r)$, $r=4$ or $5$, $\tilde m =rr'$, $r' =2$ (resp. 5) if $r =4$ (resp. 5); \,\, $L_{\rm red}$ is a non­singular rational curve, and  ${\rm Sing}(X_0)\cap {\rm Supp}(L)$ consists of two quotient singularities of types $1/r^2(3, r - 3)$ and $1/r'^2(1, r' -1)$.

\begin{figure}[hbtb]
\begin{center}
\includegraphics[scale=0.25]{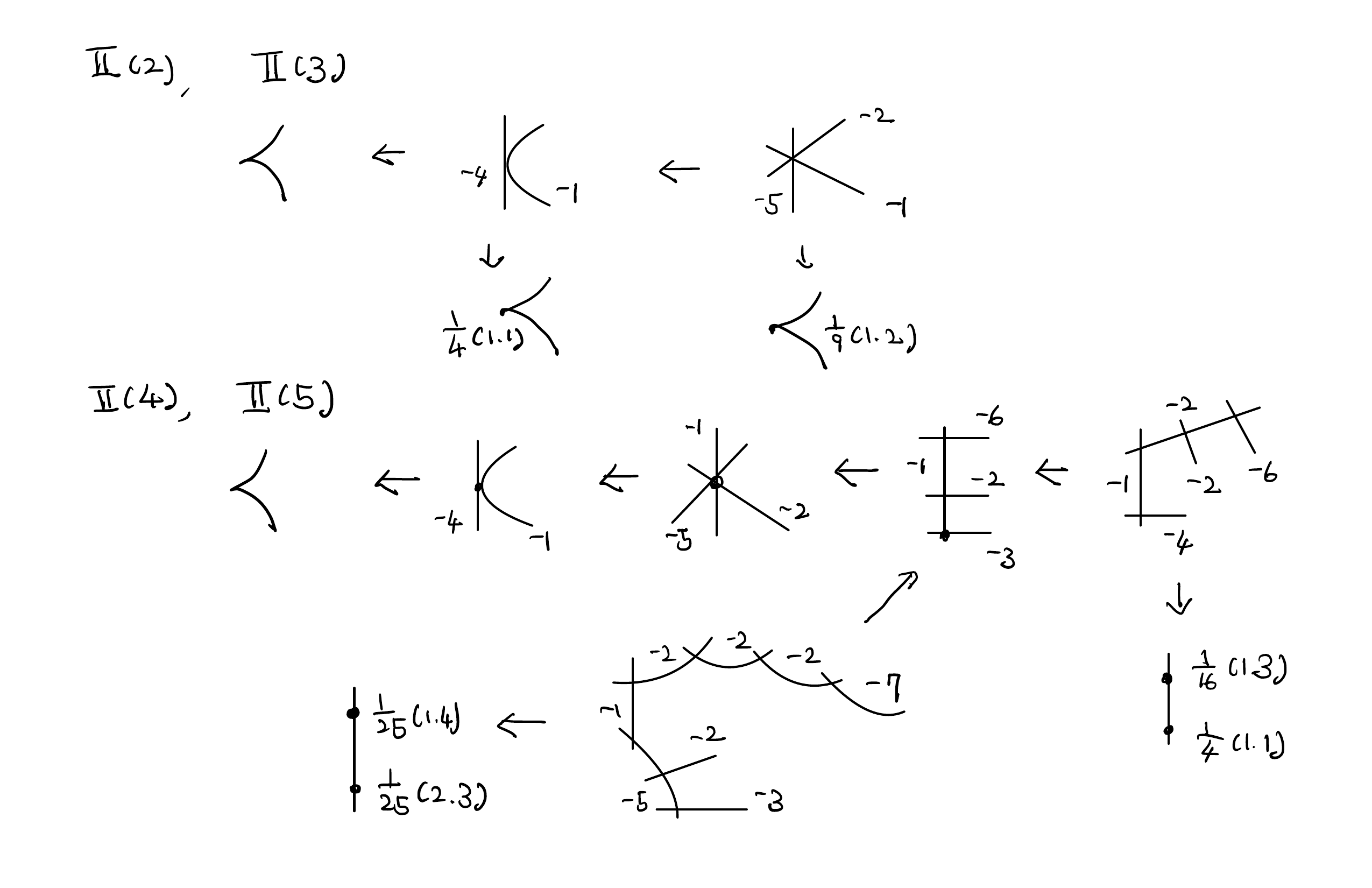}

\end{center}
\label{fig-lct}
\end{figure}

\item type $III(2)$, $\tilde m = 2, r=2$; \,\, $L_{\rm red} = F_1 +2F_2$, where the $F_j$ are nonsingular rational curves intersecting transversally at a point, and ${\rm Sing}(X_0)\cap {\rm Supp}(L)$ consists of two quotient singularities of type $1/4(1,1)$ on nonsingular points of ${\rm Supp}(L)$ on $F_2$.

\item type $III(3)$, $\tilde m = 9, r=3$; \,\, $L_{\rm red} = F_1 +F_2$, where the $F_j$ are nonsingular rational curves intersecting transversally at a point, and ${\rm Sing}(X_0)\cap {\rm Supp}(L)$ consists of three quotient singularities of type $1/9(1,2)$ such that one of them is at $F_1\cap F_2$ and the other two are at nonsingular points of $L_{\rm red}$ one on each $F_j$. 

\item type $IV(2)$, $\tilde m=4, r=2$; \,\,  $L_{\rm red} = F_1 +F_2 +F_3$, where the $F_j$ are nonsingular rational curves intersecting transversally at one point, and ${\rm Sing}(X_0)\cap {\rm Supp}(L)$  consists of four quotient singularities of type $1/4(1,1)$ such that one of them is at $F_1 \cap F_2 \cap F_3$ and the other three are at nonsingular points of $L_{\rm red}$ one on each $F_j$. 

\begin{figure}[hbtb]
\begin{center}
\includegraphics[scale=0.25]{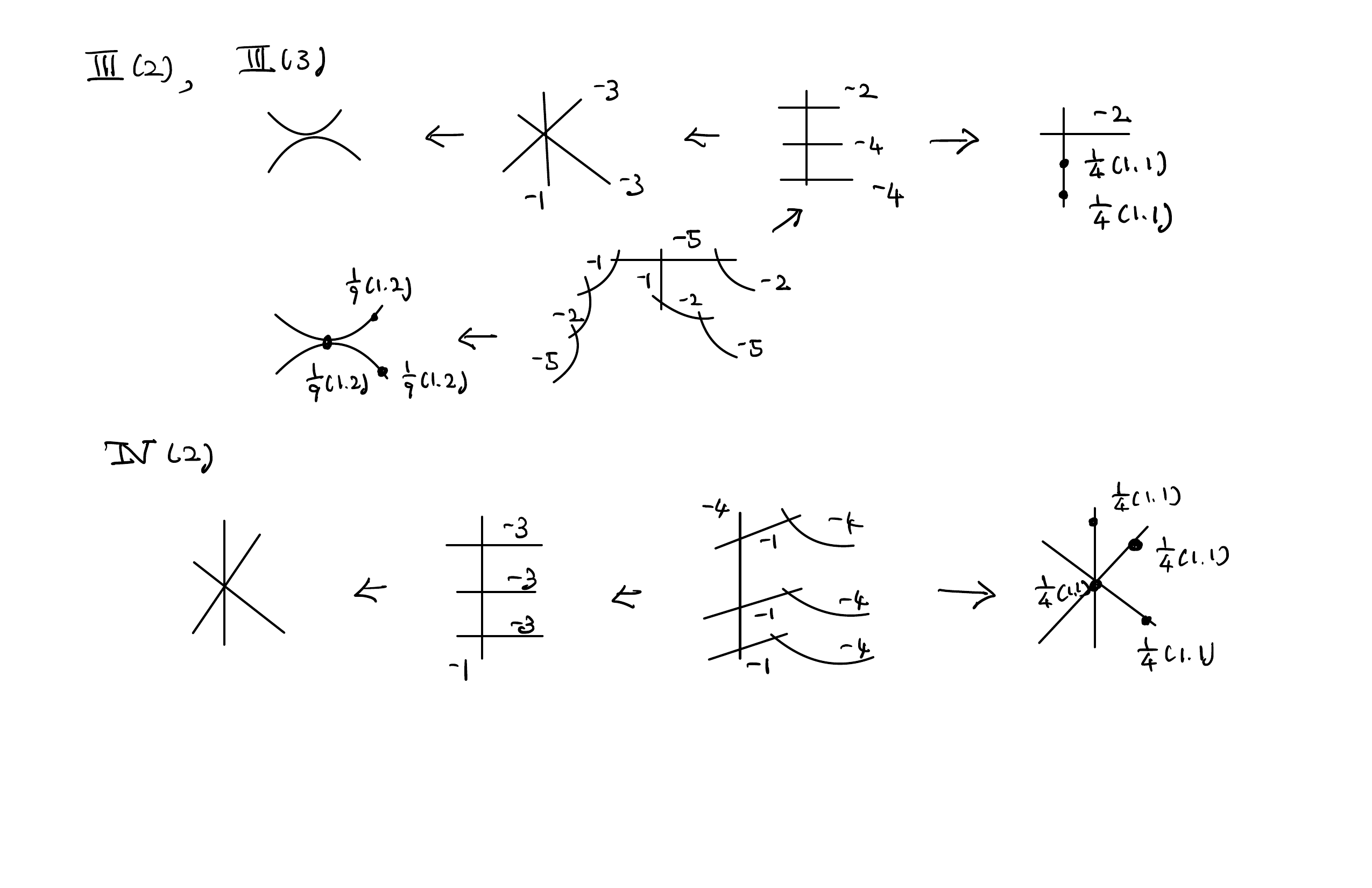}

\end{center}
\label{fig-lct}
\end{figure}
\end{enumerate}

$L_{\bar S}$ has the same singular fiber type for each case in the above. Singular fiber types $mI_0, I_b^*, II^*, III^*, IV^*$ do not occur on the center fiber of $f$ when $f$ is not smooth.

And the canonical bundle formula holds:
$$K_{X_0}\sim_{\QQ} g_{X_0}^*{\bf d}+\sum_k(m^{(k)}-1)F^{(k)},$$
where ${\bf d}$ is some divisor (moduli divisor) on $B_0$ and the summation is taken for all the multiple fibers $L^{(k)}=m^{(k)}F^{(k)}$ whose multiplicities $m^{(k)}$ are defined by the following table.
\[\begin{array}{ccccc}
\text{type of $L^{(k)}$} & mI_d(r, a)  & II(r) & III(r) & IV(r) \\
m^{(k)} & mr  & r & r& r
\end{array}\]

Consider a relatively minimal smooth rational elliptic surface $Y$ with a section. Assume that $Y$ has at least two singular fibers of type $I_n, II, III$, or $IV$. Then one can construct a normal rational elliptic surface $X$ by replacing these two singular fibers with the singular fibers in the above theorem, using blow-ups and Artin's contractibility theorem \cite[Theorem~2.3]{Art}.

\begin{Thm}\label{smoothing}
If there are at most two singular fibers $L_{\rm red}$ in Theorem~\ref{classification}, then there exists a $\QQ$-Gorenstein smoothing to elliptic surfaces. If there is exactly one such singular fiber $L_{\rm red}$, this $\QQ$-Gorenstein smoothing gives a rational elliptic surface of index $r$. If there are two such singular fibers $L_{\rm red}$, their $\QQ$-Gorenstein smoothing gives an elliptic surface with non-negative Kodaira dimension, unless both multiplicities are two, in which case it gives an Enriques surface.
\end{Thm}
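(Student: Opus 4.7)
The plan is to proceed in three stages: verify local smoothability, produce a global $\QQ$-Gorenstein smoothing via an obstruction argument, and identify the general fibre by the canonical bundle formula. For the local step, by Theorem~\ref{classification} every singularity of $X$ is a cyclic quotient of type $\frac{1}{r^2}(a,r-a)$ with $\gcd(a,r)=1$; these are the standard T-singularities, and each carries an explicit one-parameter $\QQ$-Gorenstein smoothing realized as a hypersurface in a cyclic quotient of $\Aa^3$, see \cite[Section~2]{LP}. To patch these into a global $\QQ$-Gorenstein smoothing $f:\cX\to\De$ with $f^{-1}(0)=X$, I would follow the strategy of \cite[Section~2]{LP}, \cite[Section~6]{LN}, and \cite[Section~2]{CL}: the obstruction lives in $H^2$ of the appropriate tangent complex on $X$ and, via the local-to-global spectral sequence, is controlled by $H^2(\widetilde X, T_{\widetilde X}(-\log E))$, where $\widetilde X\to X$ is the minimal resolution and $E$ its exceptional divisor. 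Since $\widetilde X$ is obtained from the rational elliptic surface $Y$ by a sequence of explicit blow-ups over at most two of the $I_n/II/III/IV$ fibres of $Y$, it is rational and $E$ is a disjoint union of chains of $\PP^1$'s of computable self-intersections; Serre duality on $\widetilde X$ together with the Leray spectral sequence for the induced elliptic fibration $\widetilde X\to\PP^1$ then yields the vanishing.

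Once the smoothing exists, identify $X_t$ for $t\neq 0$ as follows. Property~(2) of moderate degenerations preserves $\chi(\Oo)$, so $\chi(\Oo_{X_t})=\chi(\Oo_Y)=1$. The elliptic fibration on $X$ extends to a relatively minimal elliptic fibration $X_t\to\PP^1$, giving $q(X_t)=p_g(X_t)=0$, and by the classification each Kawamata-type fibre of multiplicity $m^{(k)}$ on $X_0$ smooths to a genuine multiple fibre of the same multiplicity on $X_t$. The canonical bundle formula
\[
K_{X_t}\sim_{\QQ}\Bigl(-1+\sum_{k}(1-1/m^{(k)})\Bigr)F
\]
then yields, in case~(1) with a unique multiple fibre of multiplicity $r$, $K_{X_t}\sim_{\QQ}-F/r$, so $X_t$ is a rational elliptic surface of index $r$; in case~(2) with two multiple fibres of multiplicities $m^{(1)},m^{(2)}\geq 2$, the coefficient $1-1/m^{(1)}-1/m^{(2)}$ is nonnegative and vanishes precisely when $m^{(1)}=m^{(2)}=2$, giving an Enriques surface in that equality case and $\kappa(X_t)=1$ otherwise.

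The hardest step will be the cohomology vanishing underlying the global smoothing, particularly in case~(2) where the singular locus of $X$ sits on two distinct fibres and one must rule out an obstruction to simultaneous smoothing. I would handle this by decomposing $E$ into the two exceptional clusters, bounding the contribution from each via Serre duality and the negativity of the components, following the template of \cite{LP, LN, CL} whose arguments essentially cover the Kawamata configurations arising here.
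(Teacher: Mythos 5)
Your proposal follows essentially the same route as the paper: the paper's proof likewise obtains the $\QQ$-Gorenstein smoothing by invoking the methods of \cite{LP}, \cite{LN}, \cite{CL} (which is exactly the $H^2(T_{\widetilde X}(-\log E))$-vanishing argument you sketch), and then identifies the general fibre via the canonical bundle formula $K_{X_t}\sim g_{X_t}^*\cO_{B_t}(-1)+\sum_k(m^{(k)}-1)F^{(k)}$, concluding that $K_{X_t}$ is $\QQ$-effective unless $(m^{(1)},m^{(2)})=(2,2)$. Your expansion of the obstruction-theoretic step and your degree computation $-1+\sum_k(1-1/m^{(k)})$ are consistent with, and somewhat more explicit than, what the paper writes.
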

\begin{proof}
By the method in \cite[Section~2]{LP}, or \cite[Section~6]{LN}, or \cite[Section~2]{CL}, we obtain a $\QQ$-Gorenstein smoothing $f: \cX\to\De$ such that $f^{-1}(0)=X$ and a general fiber $X_t$ is a relatively minimal smooth elliptic surface with $p_g=q=0$. By the $\QQ$-Gorenstein condition, the canonical bundle formula for $X_t$ takes the form
$$K_{X_t}\sim  g_{X_t}^*{\cO_{B_t}}(-1)+(m^{(1)}-1)F^{(1)}+(m^{(2)}-1)F^{(2)}$$
where two multiple fibers are $L^{(k)}=m^{(k)}F^{(k)}$, and the multiplicities $m^{(k)}$ are defined by the above table. Hence $K_{X_t}$ is $\QQ$-effective unless $(m^{(1)},m^{(2)})=(2,2)$.
\end{proof}

\begin{Rmk}
The advantage of constructing Dolgachev surfaces with $\QQ$-Gorenstein smoothings is that it is extendable to arbitrary 
characteristic. For instance, consider a rational elliptic surface obtained by blowing up the base points of the following pencil of cubics:
\[ \phi_0=xyz, \quad \phi_\infty=(x+y)(y+z)(x+y+z).\]
This rational elliptic surface has two $I_5$ singular fibers (cf. \cite[Section~4.9]{CDL}) and can be defined over ${\rm Spec}\, \ZZ$. One can produce an elliptic surface having two multiple fibers with arbitrary multiplicities over an algebraically closed field of any 
characteristic $p\ge 0$ via $\QQ$-Gorenstein smoothings \cite[Section~6]{LN}.
\end{Rmk}

\section{Limit surface when a multiple fiber approaches to an additive type fiber}\label{limit}

\medskip
\begin{Lem}\label{lct}
Let $L_0=L_{\rm red}$ in Theorem~\ref{classification}. Then the log canonical thresholds (lct) of $(X_0, L_0)$ in Theorem~4.2 in \cite{Kaw} are the following.
\begin{itemize}
\item ${\rm lct}(X_0, L_0)=2/3$ for type $II(2)$,
\item ${\rm lct}(X_0, L_0)=1/2$ for type $II(3)$,
\item ${\rm lct}(X_0, L_0)=2/3$ for type $II(4)$,
\item ${\rm lct}(X_0, L_0)=4/5$ for type $II(5)$,
\item ${\rm lct}(X_0, L_0)=1$ for type $III(2)$,
\item ${\rm lct}(X_0, L_0)=3/4$ for type $III(3)$,
\item ${\rm lct}(X_0, L_0)=2/3$ for type $IV(2)$.
\end{itemize}
\end{Lem}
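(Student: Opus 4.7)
The plan is to compute each log canonical threshold case by case from an explicit log resolution of $(X_0, L_0)$, built by combining the minimal resolution $\sigma\colon S\to X_0$ of the quotient singularities of $X_0$ with a few additional blowups to remove tangencies.

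First, for each Wahl singularity appearing in Theorem~\ref{classification}, I would describe the minimal resolution as a chain of smooth rational curves $E_i$ with self-intersections prescribed by the Hirzebruch--Jung continued fraction of the singularity, and then compute the discrepancies $a(E_i)\in\mathbb{Q}_{<0}$ by solving the linear system $(a_j)\cdot(E_i\cdot E_j)=(-E_i^2-2)_i$; for example, $\tfrac{1}{4}(1,1)$ gives a single $(-4)$-curve with discrepancy $-1/2$, and $\tfrac{1}{9}(1,2)$ gives the chain $[-5,-2]$ with discrepancies $-2/3,-1/3$. Next, I would identify the strict transform $\widetilde L_0\subset S$ locally at each singular point, determining the coefficients $\alpha_i$ in $\sigma^*L_0=\widetilde L_0+\sum\alpha_iE_i$ from the projection formula $\sigma^*L_0\cdot E_i=0$ together with the local class of $L_0$ in $\mathrm{Cl}(X_0,p)$; this class is constrained by the fact that the scheme-theoretic fiber $L=\tilde m\,L_0$ is Cartier. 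In each of the seven types one verifies that $\widetilde L_0$ is either smooth and tangent to an exceptional component (for $II(r)$, whose cusps on $X_0$ are resolved by $\sigma$) or smooth and meets the exceptional chain in a predictable transverse pattern (for the reducible types $III(r)$ and $IV(2)$).

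Where $\widetilde L_0+\sum E_i$ still fails to be simple normal crossings, I would blow up one or two more times to reach a log resolution $\pi\colon Y\to X_0$, updating discrepancies and multiplicities by the standard rules $a(F_{\mathrm{new}})=a(\mathrm{center})+1$ and $\mathrm{mult}_{F_{\mathrm{new}}}(\pi^*L_0)=\mathrm{mult}_{\mathrm{center}}(\pi^*L_0)$. For instance, for $II(2)$ two extra blowups suffice, the second producing an exceptional divisor with multiplicity $3$ and discrepancy $1$, giving the constraint $3c-1\le1$, hence $c\le 2/3$. In each case the lct is then $\min\bigl\{1,\min_F(1+a(F))/\mathrm{mult}_F(\pi^*L_0)\bigr\}$ ranged over all exceptional divisors, and the minimum matches the claimed value. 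The main obstacle is getting the identification of $\widetilde L_0$ right: the smoothness or cuspidality of $\widetilde L_0$ on $S$, its tangency pattern with each $E_i$, and whether intersection points of $\widetilde L_0$ with different $E_i$ coincide, all depend on the specific Kodaira type of $L_{\bar S}$ and the Wahl-cover description of $X_0$, and a mistake here propagates directly to a wrong lct. Once these configurations are correctly fixed in each of the seven cases, the remaining computation is mechanical.
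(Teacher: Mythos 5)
Your proposal follows essentially the same route as the paper: both pass to an explicit log resolution (minimal resolution of the quotient singularities followed by a few extra blowups to resolve the cusp/tangency of the strict transform of $L_0$), compute the relative canonical divisor and the multiplicities in $p^*L_0$ case by case, and read off ${\rm lct}(X_0,L_0)=\min\{(b_j+1)/r_j,1\}$; your sample computations (e.g.\ the $(-4)$-curve with discrepancy $-1/2$ for $\frac14(1,1)$, the chain $[-5,-2]$ with discrepancies $-2/3,-1/3$ for $\frac19(1,2)$, and the divisor with multiplicity $3$ and discrepancy $1$ giving $2/3$ in type $II(2)$) agree with the paper's tables. The only difference is that the paper records the resolved configurations in figures and lists all seven cases explicitly, whereas you describe the procedure and correctly flag that pinning down the strict transform's tangency pattern is the only delicate point.
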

\begin{proof}
Let $p: Y_0\to X_0$ be the log resolution of $X_0$ given in  Theorem~\ref{classification}. We have the following configuration of exceptional curves and the proper transform of $L_0$ in $Y$.
\begin{figure}[hbtb]
\begin{center}
\includegraphics[scale=0.2]{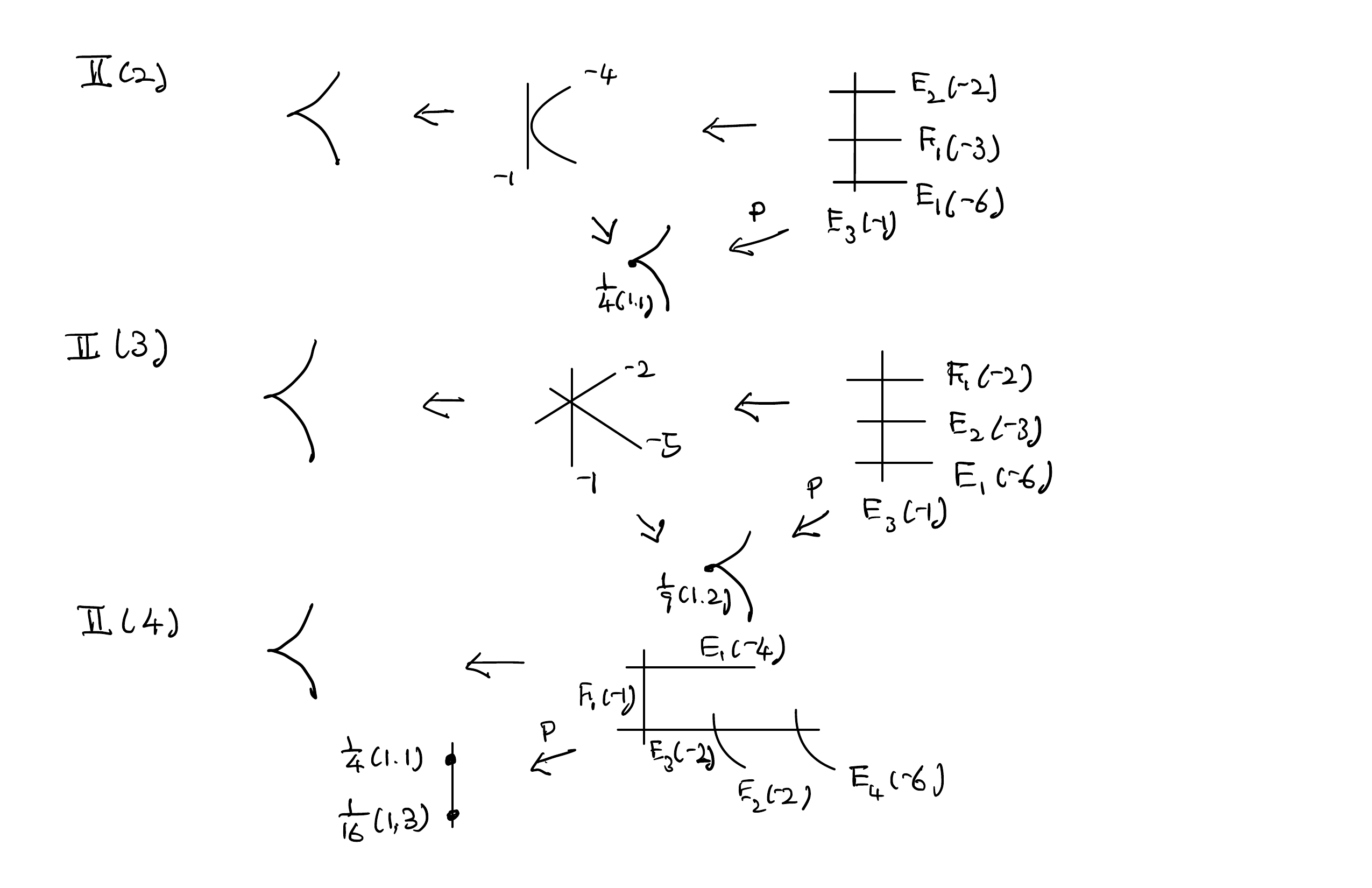}
\includegraphics[scale=0.2]{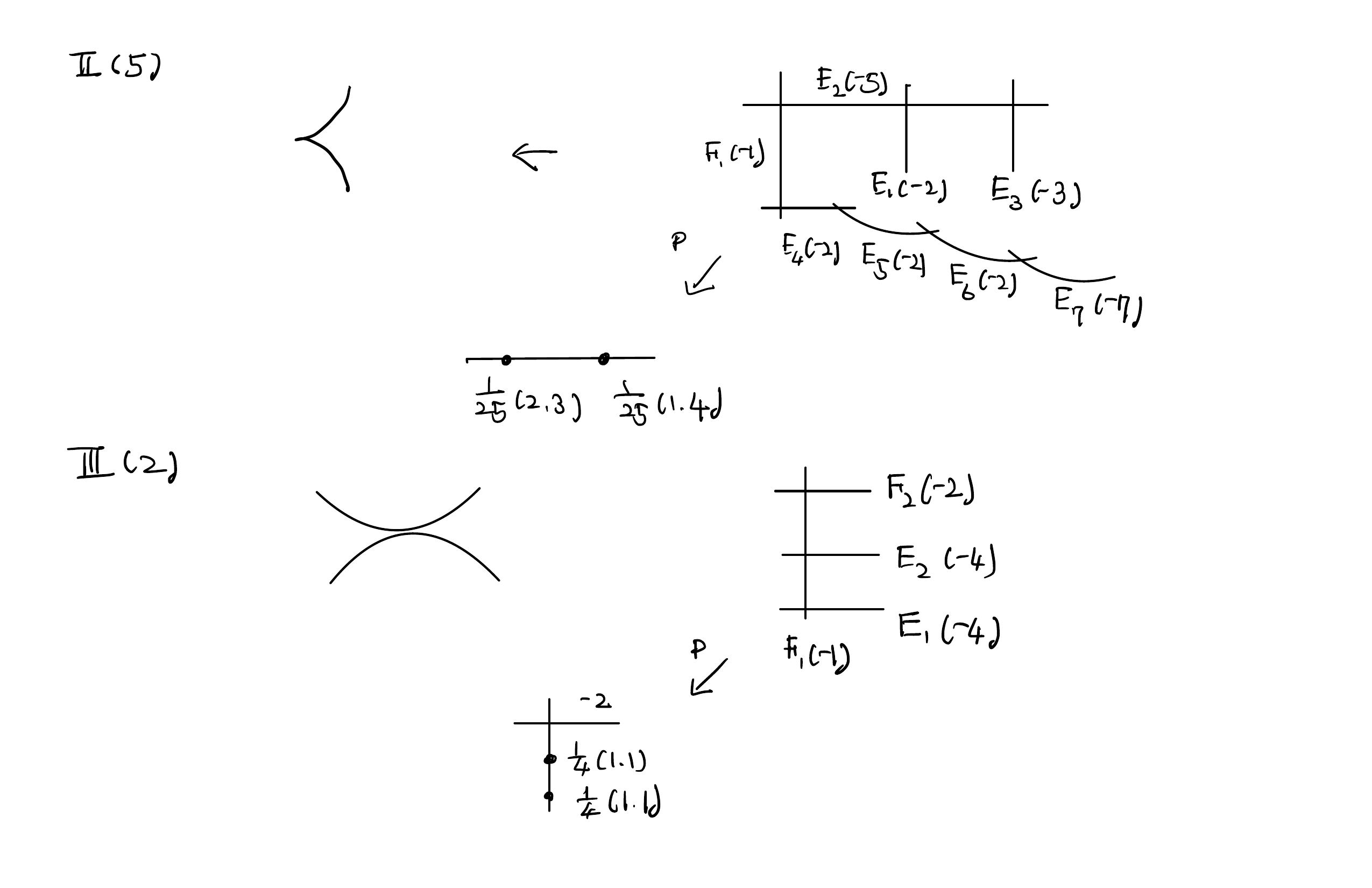}
\includegraphics[scale=0.2]{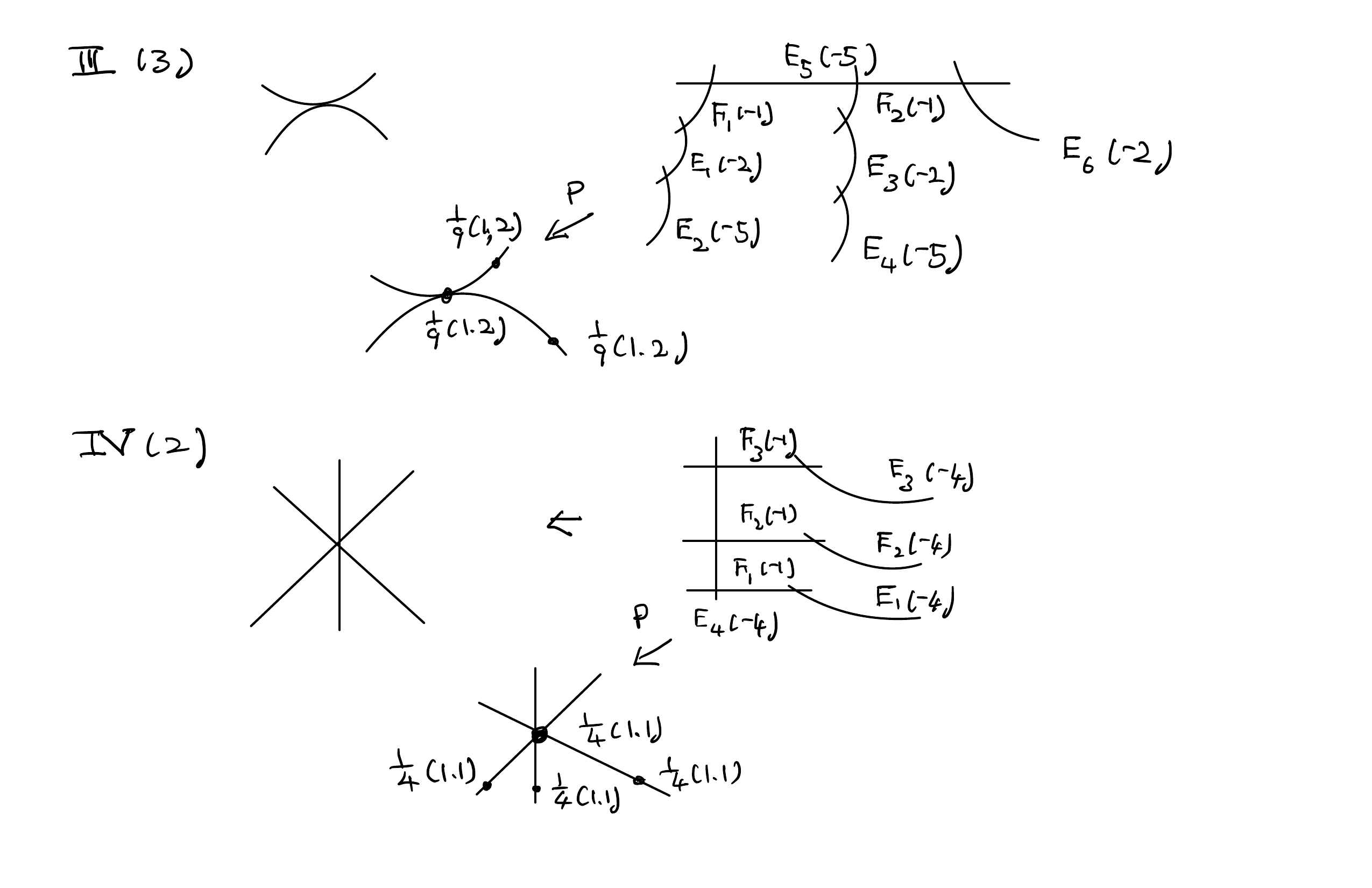}
\end{center}
\caption{Log resolution of $X_0$}
\label{fig-lct}
\end{figure}

Then we obtain the followings by an easy computation.
\begin{itemize}
\item[$II(2)$:] $K_{Y_0}=p^*K_X-1/2E_1+1/2E_2+E_3$, \\ $p^*L_0=1/2E_1+3/2E_2+3E_3+F_1$, \smallskip
\item[$II(3)$:] $K_{Y_0}=p^*K_X-2/3E_1-1/3E_2$, \\ $p^*L_0=1/3E_1+2/3E_2+2E_3+F_1$, \smallskip
\item[$II(4)$:] $K_{Y_0}=p^*K_X-1/2E_1-1/4E_2-1/2E_3-3/4E_4$, \\$p^*L_0=1/4E_1+3/8E_2+3/4E_3+1/8E_4+F_1$,  \smallskip
\item[$II(5)$:] $K_{Y_0}=p^*K_X-2/5E_1-4/5E_2-3/5E_3-1/5E_4-2/5E_5-3/5E_6-4/5E_7$, \\ $p^*L_0=1/8E_1+1/4E_2+1/8E_3+19/25E_4+13/25E_5+7/25E_6+1/25E_7+F_1$,  \smallskip
\item[$III(2)$:] $K_{Y_0}=p^*K_X-1/2E_1-1/2E_2$, \\ $p^*L_0=1/4E_1+1/4E_2+F_1+F_2$, \smallskip
\item[$III(3)$:] $K_{Y_0}=p^*K_X-1/3E_1-2/3E_2-1/3E_3-2/3E_4-2/3E_5-1/3E_6$, \\ $p^*L_0=5/9E_1+1/9E_2+5/9E_3+1/9E_4+4/9E_5+2/9E_6+F_1+F_2$,  \smallskip
\item[$IV(2)$:] $K_{Y_0}=p^*K_X-1/2E_1-1/2E_2-1/2E_3-1/2E_4$, \\ $p^*L_0=1/4E_1+1/4E_2+1/4E_3+3/4E_4+F_1+F_2+F_3$. \smallskip
\end{itemize}
Moreover, ${\rm lct}(X_0, L_0)={\rm min}\{\frac{b_j+1}{r_j}, 1\}$ if $K_{Y_0/X_0}=\sum b_i\bar E_i$ and $p^*L_0=\sum r_i\bar E_i$ for prime divisors $\bar E_i$ on $Y_0$ (cf. \cite{Laz}, Example 9.3.16).
\end{proof}

\begin{Thm}\label{gluing}
The following slt surfaces $X$ have a $\QQ$-Gorenstein smoothing to rational elliptic surfaces of index 1.
\begin{enumerate}
\item $X=X_1\cup_{\PP^1} X_2$ is the union of two rational elliptic surfaces of index 1 with RDPs glued along twisted $I_0^*$ (the double curve is a $\PP^1$ and four $A_1$ singularities of $X_i$ are placed on this $\PP^1$, and locally around each point the surface is a quotient of a nodal surface by $\ZZ/2\ZZ$). 
\item $X=X_1\cup_{\PP^1} X_2$ is the union of two rational elliptic surfaces of index 1 with cyclic quotient singularities along twisted $II\, |\, II^*$ (the double curve is a $\PP^1$, $\frac{1}{2}(1, 1)$, $\frac{1}{3}(1,1$), $\frac{1}{6}(1,1)$ singularities of $X_1$ are placed on this $\PP^1$, $\frac{1}{2}(1, 1), \frac{1}{3}(1,2), \frac{1}{6}(1,5)$ singularities of $X_2$ are placed respectively on this $\PP^1$, and locally around each point of the surface is a quotient of a nodal surface by $\ZZ/2\ZZ$, $\ZZ/3\ZZ$, $\ZZ/6\ZZ$, respectively). 
\item $X=X_1\cup_{\PP^1} X_2$ is the union of two rational elliptic surfaces of index 1 with cyclic quotient singularities along twisted $III\, |\, III^*$ (the double curve is a $\PP^1$, $\frac{1}{2}(1, 1), \frac{1}{4}(1,1), \frac{1}{4}(1,1)$ singularities of $X_1$ are placed on this $\PP^1$, $\frac{1}{2}(1, 1)$, $\frac{1}{4}(1,3)$, $\frac{1}{4}(1,3)$ singularities of $X_2$ are placed respectively on this $\PP^1$, and locally around each point of the surface is a quotient of a nodal surface by $\ZZ/2\ZZ$, $\ZZ/4\ZZ$, $\ZZ/4\ZZ$, respectively). 
\item $X=X_1\cup_{\PP^1} X_2$ is the union of two rational elliptic surfaces of index 1 with cyclic quotient singularities along twisted $IV\, |\, IV^*$ (the double curve is a $\PP^1$, three $\frac{1}{3}(1, 1)$ singularities of $X_1$ are placed on this $\PP^1$, three $A_2$ singularities of $X_2$ are placed respectively on this $\PP^1$, and locally around each point of the surface is a quotient of a nodal surface by $\ZZ/3\ZZ$). 
\end{enumerate}
\end{Thm}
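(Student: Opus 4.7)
The plan is to follow the $\QQ$-Gorenstein smoothing methodology of \cite{LP}, \cite{LN}, and \cite{CL}: verify local smoothability at each slt singular point on the double curve $D\cong\PP^1$, and then lift these to a global one-parameter smoothing by a cohomological obstruction argument. In all four cases the singular locus of $X$ is concentrated on $D$, and each singular germ is claimed to be the $\ZZ/r\ZZ$-quotient of the nodal threefold $\{xy=zw\}\subset\CC^4$. Thus the proof reduces to (i) identifying these local models together with their canonical local $\QQ$-Gorenstein smoothings, and (ii) showing that the obstruction to lifting them to a global family $f:\cX\to\De$ with $f^{-1}(0)=X$ vanishes.

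For step (i), the germ $(xy=zw)/(\ZZ/r\ZZ)$ with the action $(x,y,z,w)\mapsto(\zeta_r x,\zeta_r^{-1}y,\zeta_r^a z,\zeta_r^{-a}w)$ for the appropriate $a$ is a non-normal $T$-type slc singularity, whose canonical local $\QQ$-Gorenstein smoothing is realized by the one-parameter deformation $\{xy=zw+t\}/(\ZZ/r\ZZ)$. The work in this step is to verify case by case that the two branches of the normalization of such a germ yield precisely the listed cyclic quotient singularities on $X_1$ and $X_2$ with the stated weights. The apparent weight asymmetry (e.g.\ $\frac{1}{3}(1,1)$ on $X_1$ versus $\frac{1}{3}(1,2)$ on $X_2$ in case (2), or $\frac{1}{4}(1,1)$ versus $\frac{1}{4}(1,3)$ in case (3)) reflects the opposite characters of the $\ZZ/r\ZZ$-action on the two tangent directions $(z,w)$ of the node, and expresses a local $d$-semistability compatibility of the conormal bundles of $D$ in $X_1$ and in $X_2$.

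For step (ii), let $\mathcal T^{QG}_X$ denote the sheaf of first-order $\QQ$-Gorenstein deformations of $X$. The local-to-global obstruction to lifting the local smoothings sits in $H^2(X,\mathcal T^{QG}_X)$, and a Mayer--Vietoris argument on $X=X_1\cup_D X_2$ combined with the rationality of each $X_i$ gives the required vanishing, precisely as in \cite[Section~2]{LP} and \cite[Section~6]{LN}. The resulting smoothing $f:\cX\to\De$ carries the elliptic fibrations on $X_1$ and $X_2$, which glue to a fibration $X\to\PP^1$ contracting $D$ to a point, to a relative elliptic fibration $\cX\to\PP^1\times\De$; the twisted singular fiber supported on $D$ smooths to a smooth elliptic curve, since on a generic fiber the Milnor fiber of the node modulo the (now free) $\ZZ/r\ZZ$-action is again a smooth genus-one curve. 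Preservation of invariants under $\QQ$-Gorenstein smoothing gives $K^2_{X_t}=0$, $\chi(\cO_{X_t})=1$, and $p_g(X_t)=q(X_t)=0$, so $X_t$ is a rational elliptic surface; the absence of any multiple fiber then forces its index to equal $1$.

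The main obstacle is the local verification in step (i) for cases (2)--(4): one must confirm explicitly that the listed pairs of cyclic quotient singularities on $X_1$ and $X_2$ do assemble into genuine $(xy=zw)/(\ZZ/r\ZZ)$ germs of $X$ (i.e., that the conormal weights of $D$ in $X_1$ and in $X_2$ cancel modulo $r$ at each singular point), and that the elliptic fibrations on the two sides match smoothly across $D$. Case (1) (twisted $I_0^*$) is essentially contained in the Ascher--Bejleri construction of broken degree-one del Pezzo surfaces; cases (2)--(4) are new and require additional bookkeeping, but once the local matchings are established, the cohomological vanishing and identification of the smoothing as a rational elliptic surface of index $1$ follow by the same pattern as in Theorem~\ref{smoothing}.
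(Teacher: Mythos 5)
Your overall strategy (local $T$-singularity/quotient-of-node models plus a local-to-global vanishing argument, in the style of \cite{LP}, \cite{LN}, \cite{CL}) is the same as the paper's, and your step (i) is fine --- though note that the local description as a $\ZZ/r\ZZ$-quotient of a node is already built into the statement, so most of that bookkeeping is hypothesis rather than something to be proved. The genuine gap is in step (ii): you assert that the required vanishing follows from ``a Mayer--Vietoris argument combined with the rationality of each $X_i$,'' but rationality alone does not give $H^2(T_X)=0$ here, and you never address the vanishing of $H^1(\cT^1_{\QQ G,X})$ or $H^0(\cT^2_{\QQ G,X})$ at all. The paper's argument needs three separate inputs: (a) $H^0(\cT^2_{\QQ G,X})=0$ because the local canonical covers are complete intersections; (b) $H^1(\cT^1_{\QQ G,X})=0$ via the explicit identification $\cT^1_{\QQ G,X}=i_*\cO_{\PP^1}(4)$ coming from Hassett's formula and the different $\mathrm{Diff}=\frac{m-1}{m}$ computed from \cite[Proposition~16.6]{Cor}; and (c) $H^2(T_X)=0$, which by \cite[Lemma~9.4]{Hac} reduces to $H^2(T_{X_i}(-E_i))=0$, and this in turn is \emph{not} a formal consequence of rationality: one must show $-K_{X_i}-E_i$ is effective, which requires the canonical bundle formula for the components ($F_i=(r+1)E_i$ and $K_{X_1}=-F_1+(r-1)E_1$ with $B=(r-1)E_1$ from \cite[Theorem~1.2]{AB17}, giving $-K_{X_1}-E_1=E_1$ and $-K_{X_2}-E_2=rE_2$). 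Without this effectivity the Serre-duality injection into $H^0(\Omega_{X_i})=0$ is unavailable, and the vanishing could fail for a general rational surface pair.

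A secondary gap is the identification of the general fiber as a rational elliptic surface of \emph{index 1}. You argue that the twisted fiber smooths to a genus-one curve and then claim ``absence of any multiple fiber,'' but you give no reason why the smoothing cannot acquire a multiple fiber elsewhere, nor why the smoothed fiber over the image of $D$ is non-multiple. The paper's route is cleaner and global: by adjunction on the slt union, $K_X=K_{X_1}+E_1+K_{X_2}+E_2\sim -\mathrm{fiber}$, so the anticanonical class of the smoothing is a fiber class and the index is forced to be $1$. You should replace your local Milnor-fiber heuristic with this canonical-class computation, or otherwise rule out multiple fibers directly.
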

\begin{proof} (1) is proved in \cite[Proposition~4.11]{AB22}. Others can be proven in a similar manner. In each case, $X$ has local canonical covering by a local complete intersection, so that $H^0(\cT^2_{\QQ G, X})=0$. The sheaf $\cT^1_{\QQ G,X}$ is supported on the singular locus of $X$ which lie on the gluing $\PP^1$. Let $i: \PP^1\hookrightarrow X$. By \cite[Proposition~3.6]{Has}, $\cT^1_{\QQ G, X}=i_*\cO_{\PP^1}(n)$ where $n=4$ in all the cases because $\mathrm{Diff}=\frac{m-1}{m}$ where $(\PP^1, X_i)\cong ((x=0)\subset \CC^2/{\ZZ}_m)$ and ${\ZZ}_m$ acts with weights $(1, q)$ with $(q, m)=1$ (cf. \cite[Proposition~16.6]{Cor}). So we have $H^1(\cT^1_{\QQ G, X})=0$.

Let $(X_i, E_i)$ for $i = 1,2$ denote the two components and $E_i = E|_{X_i}$ denote the restriction of the double locus. Following \cite[Lemma~9.4]{Hac}, to show that $H^2(T_X) = 0$, it suffices to show
that $H^2(T_{X_i} (-E_i)) = 0$. This is equivalent to showing that $\cO_{X_i}(-K_{X_i}-E_i)$ has a non-zero section, since every such section induces an injection $H^0(\Omega_{X_i}\otimes\cO(K_{X_i}+E_i))\to H^0(\Omega_{X_i})=0$. 
Let $f_i: X_i\to\PP^1$ be an elliptic fibration and $F_i$ be a general fiber of $f_i$ for $i=1, 2$. We note that $K_{X_1}=-F_1+B$ where $B$ is effective and $B=(r-1)E_1$ by \cite[Theorem~1.2]{AB17}. Here $r=1$ for (1), $r=5$ for (2), $r=3$ for (3), $r=2$ for (4), respectively. So we have $-K_{X_1}-E_1=F_1-rE_1=E_1$ because $F_1=(r+1)E_1$. And since $X_2$ is obtained by contracting $(-2)$-curves, $K_{X_2}=-F_2$. Therefore $-K_{X_2}-E_2=F_2-E_2= rE_2$ because $F_2=(r+1)E_2$. So the reflexive sheaf $-K_{X_i}-E_i$ has a non-zero section for $i=1, 2$.

Therefore, $X$ has a $\QQ$-Gorenstein smoothing $\cX\to\De$ and $(r+1)K_{\cX}$ is Cartier. Since we have 
$$K_X=K_{X_1}+E_1+K_{X_2}+E_2\sim -{\rm fiber},$$ 
$X$ is deformed to a rational elliptic surface without a multiple fiber.
\end{proof}

\begin{Cor}\label{gluing2}
Let $X=X_1\cup_{\PP^1} X_2$ be the slt union of two rational elliptic surfaces of index $m_1$ and $m_2$ where $(m_1, m_2)=1$ whose gluing is one of the type in Theorem~\ref{gluing}.
Then $X$ has a $\QQ$-Gorenstein smoothing to Dolgachev surfaces $X_t$ of type $(m_1, m_2)$.
\end{Cor}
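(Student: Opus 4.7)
The plan is to follow the deformation-theoretic strategy of Theorem~\ref{gluing}, tracking the modifications forced by the multiple fibers on each $X_i$.

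First, I would verify that the local obstructions are unchanged: since the analytic type of $X$ along the gluing $\PP^1$ matches one of the four types of Theorem~\ref{gluing}, each local canonical cover remains a local complete intersection, giving $\cT^2_{\QQ G, X}=0$; the different computation via \cite[Proposition~16.6]{Cor} still yields $\cT^1_{\QQ G, X}\cong i_*\cO_{\PP^1}(4)$, and hence $H^1(\cT^1_{\QQ G, X})=0$. The multiple fibers of each $X_i$ are supported away from the gluing curve, so they do not interact with this local analysis.

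Next I would establish $H^2(T_X)=0$. By \cite[Lemma~9.4]{Hac} this reduces to $H^2(T_{X_i}(-E_i))=0$ for each $i$, which by Serre duality is equivalent to $H^0(\Omega^1_{X_i}(K_{X_i}+E_i))=0$. Using the canonical bundle formula with both the gluing contribution and the multiple fiber of multiplicity $m_i$, we obtain $K_{X_i}\sim -2E_i+(m_i-1)F_{m_i,i}$, where $F_{m_i,i}$ is the reduction of the multiple fiber of $X_i$. In contrast to Theorem~\ref{gluing}, the Weil divisor $-K_{X_i}-E_i=E_i-(m_i-1)F_{m_i,i}$ is in general not effective once $m_i\ge 2$, so the direct injection argument into $\Omega^1_{X_i}$ fails. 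Instead I would argue via the elliptic fibration $f_i:X_i\to\PP^1$: since both $E_i$ and $F_{m_i,i}$ are fiber-supported, $\Omega^1_{X_i}(K_{X_i}+E_i)$ restricts trivially to a general fiber, and its global sections can be controlled by pushing forward along $f_i$, using the relative cotangent sequence together with $H^0(\Omega^1_{X_i})=0$.

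Once the obstruction-theoretic vanishings are in place, the $\QQ$-Gorenstein smoothing $\cX\to\De$ exists, as in the proof of Theorem~\ref{gluing}. The relation $K_X=\sum_i (K_{X_i}+E_i)|_{X_i}$ together with the canonical bundle formula for the smoothing shows that the general fiber $X_t$ is a minimal elliptic surface with $p_g=q=0$ and two multiple fibers of multiplicities $m_1,m_2$. Since $(m_1,m_2)=1$ and both $m_i\ge 2$, $X_t$ is by definition a Dolgachev surface of type $(m_1,m_2)$.

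The principal obstacle will be the modified $H^2(T_X)=0$ argument: because the simple effectivity route from Theorem~\ref{gluing} fails once $m_i\ge 2$, the fibration-based computation requires careful bookkeeping of the cyclic quotient singularities of $X_i$ along $E_i$ and of the reflexive sheaf $\Omega_{X_i}^{[1]}$ near them.
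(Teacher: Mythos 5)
Your route is genuinely different from the paper's, and it has a gap at exactly the step you flag. You attack $X$ itself with the obstruction-theoretic machinery of Theorem~\ref{gluing}, and you correctly observe that once $m_i\ge 2$ the divisor $-K_{X_i}-E_i=E_i-(m_i-1)F_{m_i,i}$ is no longer effective (numerically it is $\bigl(\tfrac{1}{r+1}-\tfrac{m_i-1}{m_i}\bigr)F_i$, which is non-positive against the fiber class), so the injection into $H^0(\Omega^1_{X_i})$ that proves $H^2(T_{X_i}(-E_i))=0$ in Theorem~\ref{gluing} is unavailable. Your proposed replacement --- restricting $\Omega^1_{X_i}(K_{X_i}+E_i)$ to fibers and pushing forward along $f_i$ --- is only a sketch: the twist $K_{X_i}+E_i$ has degree zero on every fiber, so the restriction to a general fiber is a degree-zero rank-two sheaf on an elliptic curve that can a priori have sections, and the argument does not close as stated. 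You would need genuinely new bookkeeping (of the reflexive sheaf $\Omega^{[1]}_{X_i}$ near the quotient singularities and of the direct images over $\PP^1$) to finish, and you acknowledge this yourself. The local analysis ($\cT^2_{\QQ G,X}=0$ and $H^1(\cT^1_{\QQ G,X})=0$) does carry over unchanged, since the gluing locus is by hypothesis of one of the types in Theorem~\ref{gluing} and the multiple fibers sit away from it; that part of your argument is fine.

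The paper avoids this difficulty entirely. Its proof replaces each $X_i$ by its Jacobian surface $J(X_i)$ (same fibration with the multiple fiber $m_iF_i$ replaced by $F_i$), glues these to form $Y=J(X_1)\cup_{\PP^1}J(X_2)$, and applies Theorem~\ref{gluing} \emph{to $Y$}, where the effectivity argument for $-K-E$ works because both components have index one. The fibers $F_{i,t}$ deform in the resulting smoothing $Y_t$, and performing logarithmic transforms of orders $m_1,m_2$ along them --- relatively over the disk --- produces the Dolgachev surfaces $X_t$ and exhibits the family as a $\QQ$-Gorenstein smoothing of $X$. So the paper trades your hard cohomology vanishing for the (softer, but still nontrivial) claim that the logarithmic transform can be carried out in the family and recovers $X$ over the central fiber. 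If you want to salvage your direct approach, you must supply a complete proof of $H^2(T_{X_i}(-E_i))=0$ in the presence of a multiple fiber; as written, that step is missing.
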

\begin{proof}  Let $m_1F_1$ and $m_2F_2$ be the multiple fibers of $X_1$ and $X_2$ respectively. Let $J(X_i)$ for $i=1, 2$ be the Jacobian surface of $X_i$. Note that $J(X_i)$ has the same elliptic fibers as $X_i$ except that the multiple fiber $m_iF_i$ is replaced by $F_i$. Let $Y=J(X_1)\cup_{\PP^1} J(X_2)$ be the slt union of two rational elliptic surfaces of indices $m_1$ and $m_2$ where $(m_1, m_2)=1$ whose gluing is one of the type in Theorem~\ref{gluing}. This $Y$ has a $\QQ$-Gorenstein smoothing to rational elliptic surfaces $Y_t$ of index 1 by Theorem~\ref{gluing}. Since this deformation also deforms $F_{i, t}$ for $i=1, 2$ in $Y_t$, performing the logarithmic transforms along these two $F_{i, t}$ to make the multiple fibers $m_iF_{i, t}$ produces Dolgachev surfaces $X_t$ of type $(m_1, m_2)$. This family then provides a $\QQ$-Gorenstein smoothing of $X$.
\end{proof}

As mentioned in Section~\ref{prel}, Miranda \cite{Mir80} constructed a compactification of moduli of rational elliptic surfaces of index 1 by GIT. In his compactification, it is enough to understand limit of multiple fibers to singular fibres of type $I_n, II, III, IV$ and two $I_0^*$ of elliptic surfaces. By using Theorem~\ref{smoothing}, Theorem~\ref{gluing}, and Corollary~\ref{gluing2}, we can find their limit surfaces in stable compactification of moduli spaces. When the multiplicity of multiple fiber is $\le 5$ then by Lemma~\ref{lct} wall crossing structure of compact moduli spaces occurs when one varies $0< c\le 1$ for the pair $(X, cB)$.

\end{document}